\pgfplotsset{compat=1.16, 
	     width=0.95\textwidth, 
	     height=0.8\textwidth,
	     tick label style={font=\tiny},
	     label style={font=\footnotesize},	
	     title style = {font=\small},	
	     legend style = {font=\small},
	     grid=major,
	    }
\newcommand\Dt{{\Delta{t}}}
\newcommand*\diff{\mathop{}\!\mathrm{d}} 
\newcommand*\Laplace{\mathop{}\Delta} 
\newcommand\dt{\diff t}
\renewcommand{\S}{Section }
\renewcommand{\R}{\mathbb{R}}
\newcommand{\cG}{{\mathcal{G}}}
\newcommand{\cH}{{\mathcal{H}}}
\newcommand{\cO}{{\mathcal{O}}}
\newcommand{\cU}{{\mathcal{U}}}
\newcommand{\cV}{{\mathcal{V}}}
\newcommand{\cW}{{\mathcal{W}}}
\newcommand{\cX}{{\mathcal{X}}}
\newcommand{\cY}{{\mathcal{Y}}}
\newcommand{\cZ}{{\mathcal{Z}}}
\newcommand{\bB}{{\boldsymbol{B}}}
\newcommand{\bD}{{\boldsymbol{D}}}
\newcommand{\bL}{{\boldsymbol{L}}}
\newcommand{\bM}{{\boldsymbol{M}}}
\newcommand{\bR}{{\boldsymbol{R}}}
\newcommand{\hatbJ}{{\boldsymbol{\hat{J}}}}
\newcommand{\bbf}{{\boldsymbol{f}}}
\newcommand{\bg}{{\boldsymbol{g}}}
\newcommand{\bh}{{\boldsymbol{h}}}
\newcommand{\bd}{{\boldsymbol{d}}}
\newcommand{\bu}{{\boldsymbol{u}}}
\newcommand{\by}{{\boldsymbol{y}}}
\newcommand{\bz}{{\boldsymbol{z}}}
\newcommand\ts[1]{{\textstyle{#1}}}
\newcommand{\fnameone}{F1}
\newcommand{\fnametwo}{F2}
\theoremstyle{thmstyleone}%
\newtheorem{theorem}{Theorem}[section]
\newtheorem{proposition}[theorem]{Proposition}
\newtheorem{corollary}[theorem]{Corollary}
\newtheorem{prob}[theorem]{Problem}
\theoremstyle{thmstyletwo}%
\newtheorem{remark}[theorem]{Remark}
\theoremstyle{thmstylethree}%
\numberwithin{equation}{section}
\begin{document}

\title[A Space-Time Variational Method for Optimal Control]{A Space-Time Variational Method for Optimal Control Problems: Well-posedness, stability and numerical solution}

\author{\fnm{Nina} \sur{Beranek}}\email{nina.beranek@uni-ulm.de}
\author{\fnm{M.\ Alexander} \sur{Reinhold}}\email{martin.reinhold@uni-ulm.de}
\author{\fnm{Karsten} \sur{Urban}}\email{karsten.urban@uni-ulm.de}
\affil{\orgname{Ulm University}, 
	\orgdiv{Institute for Numerical Mathematics}, 
	\orgaddress{\street{Helmholtzstr.\ 20}, 
	\postcode{89081} \city{Ulm}, \country{Germany}}}

\abstract{
We consider an optimal control problem constrained by a parabolic partial differential equation (PDE) with Robin boundary conditions. We use a well-posed space-time variational formulation in Lebesgue--Bochner spaces with minimal regularity. The abstract formulation of the optimal control problem yields the Lagrange function and Karush--Kuhn--Tucker (KKT) conditions in a natural manner. This results in space-time variational formulations of the adjoint and gradient equation in Lebesgue--Bochner spaces with minimal regularity. Necessary and sufficient optimality conditions are formulated and the optimality system is shown to be well-posed.

Next, we introduce a conforming uniformly stable simultaneous space-time (tensorproduct) discretization of the optimality system in these Lebesgue--Boch\-ner spaces. Using finite elements of appropriate orders in space and time for trial and test spaces, this setting is known to be equivalent to a Crank--Nicolson time-stepping scheme for parabolic problems. Differences to existing methods are detailed.

We show numerical comparisons with time-stepping methods. The space-time method shows good stability properties and requires fewer degrees of freedom in time to reach the same accuracy.
}

\keywords{PDE-constrained optimization problems,
		space-time variational formulation,
		finite elements}

\pacs[MSC Classification]{	 
	65J10,
	65M12,
	65Mxx
}


\maketitle


\section{Introduction}\label{Sec:1}
The optimal control of partial differential equations (PDE) is an area of vast growing significance e.g.\ in fluid flows, crystal growths or medicine, see, e.g.\ \cite{f.troeltzsch2009,MR2516528}. This explains the huge amount of literature concerning theoretical as well as numerical aspects. 

The abstract form of such problems relies on a cost function $J:\cY\times\cU\to\R$, where $\cY$ and $\cU$ are function spaces for the \emph{state} $y$ and the \emph{control} $u$. The constrained optimal control problem then takes the form
 \begin{align}\label{eq:OCP}
 	J(y,u)\to\min!\quad
	\text{s.t.  } e(y,u) = 0,
 \end{align}
where the constraint $e(y,u) = 0$ is often termed as \emph{state equation}. At this point, there is a bifurcation concerning the subsequent approach. On the one hand, the \emph{first-discretize-then-optimize} approach seeks for an appropriate discretization of \eqref{eq:OCP} and then derives optimality conditions for the discretized optimal control problem. On the other hand, \emph{first-optimize-then-discretize} means that optimality conditions are derived directly w.r.t.\ \eqref{eq:OCP} and then the arising optimality system is discretized. We shall follow the second approach. 

\paragraph{First-optimize-then-discretize} 
Within this approach, the first step is a suitable interpretation of the state equation. In case of a PDE-constrained optimal control problem, the state equation is a PDE. Here, we are interested in the case where the PDE is a parabolic problem in space and time. This offers a variety of different formulations of the state equation, e.g.
\begin{compactitem}
\item \emph{Strong form}: $e(y,u) = 0$ is interpreted pointwise. This, however does often not allow statements on the well-posedness of the state equation. 
\item \emph{Semi-variational}: Using a method of lines yields either an inital value problem of an ordinary differential equation or a system of elliptic boundary value problems. 
\item \emph{Space-time variational}: Space and time are both treated as variables in a variational sense. In that case, the state equation is tested by space-time test functions $z\in\cZ$, where $\cZ$ is an appropriate Lebesgue--Bochner space, and takes the form, for a right-hand side $f(\cdot;u)\in \cZ'$
\begin{align}\label{eq:Prob}
	\text{find } y\in \cY:\quad b(y,z) = f(z; u)\quad \text{ for all } z\in \cZ.
\end{align}
\end{compactitem}

\paragraph{Space-time variational formulations and adjoint problem}  
We follow the last-mentioned method in the above list. In the literature, this approach has already been studied, see e.g.\ \cite{MR2407012,MR2874969,MR2861431,MR3343358,MR4076464,MR4223221}, but with some (partly subtle) differences to our approach to be detailed below. The well-posedness theory for  \eqref{eq:Prob} dates back (at least) to the 1970s, see e.g.\ \cite{j.l.lions1971,lions.magenes.2,r.dautrayj.lions1992}. In order to describe to which extent our approach differs from the mentioned papers, we need to detail the choices of the bilinear form $b(\cdot,\cdot)$ as well as the trial and test spaces $\cY$ and $\cZ$. We shall see that our subsequent choice allows to show well-posedness of \eqref{eq:Prob} under minimal regularity assumptions, but this necessarily implies that $\cY\not=\cZ$ and these spaces need to satisfy an inf-sup condition, which is known to hold, \cite{c.schwabr.stevenson2009,k.urbana.t.patera2012}. We postpone a detailed comparison in \S\ref{sec:spacetime} (in particular Remark \ref{Rem:Diff1}) below. 

Another difference of the known approaches from the literature and our proposed framework is the derivation of an optimality system. For the details, we refer to \S\ref{Sec:3.3}, Remark \ref{Rem:Diff2} below. We use the variational form \eqref{eq:Prob} to derive the reduced problem (w.r.t.\ the control), which allows us to prove the existence of a unique optimal solution. In a next step, we define the Lagrange function, again using the variational form \eqref{eq:Prob}, from which we then derive the Karush--Kuhn--Tucker (KKT) conditions. The adjoint problem arises in a natural variational form by the KKT system, see Proposition \ref{propos:KKTModelProblem_general} below.

\paragraph{Space-time discretization}  
In a final step, we propose a conforming space-time discretization, which amounts to construct finite-dimensional spaces $\mathcal{Y}_\delta \subset \mathcal{Y}$ and $\mathcal{Z}_\delta \subset \mathcal{Z}$ for a Petrov--Galerkin discretization of \eqref{eq:Prob} and later also the control space $\mathcal{U}_\delta \subset \mathcal{U}$. Since $\cY\not=\cZ$, the discrete spaces $\mathcal{Y}_\delta$ and  $\mathcal{Z}_\delta$ need to satisfy a discrete inf-sup condition, also known as \emph{Ladyshenskaja--Babu\v{s}ka--Brezzi (LBB)} condition, i.e., 
\begin{align}\label{eq:LBB}
	\inf_{y_\delta \in \cY_\delta} \sup_{z_\delta \in \cZ_\delta} 
	\frac{b(y_\delta, z_\delta)}{\| y_\delta\|_\cY\, \| z_\delta\|_\cZ}
	\ge \beta >0
\end{align}
uniformly in $\delta$ (where $\beta$ is independent of $\delta$). The inf-sup constant $\beta$ is particularly relevant as the Xu--Zikatanov lemma \cite{MR1971217} yields an error/residual-relation with the multiplicative factor $\frac1\beta$. In some cases, one can realize \emph{optimally stable} discretizations, i.e., $\beta=1$ (in particular, the constant is independent of the final time, which is crucial for optimal control problems), \cite{r.andreev2012,k.urbana.t.patera2012,k.urbana.t.patera2014}. This is a key motivation for our approach. 
However, there are  different stable discretizations described in the literature. For example, in \cite{GunzburgerKunoth2011,c.schwabr.stevenson2009,stevenson2021waveletintime} wavelet methods have been used to derive an LBB-stable discretization, \cite{r.andreev2012,k.urbana.t.patera2012,k.urbana.t.patera2014} propose tensorproduct discretizations (some of them reducing to time-stepping schemes) and \cite{MR4223221,o.steinbach2015} introduce unstructured finite element discretizations in space and time. Here, we use a tensorproduct discretization since they allow for efficient numerical solvers and admit optimal stability, \cite{j.henning.etal2019,palitta2019matrix}; of course, also other schemes could be used instead. Our approach leads to a different discrete system as previous approaches, see \S\ref{Sec:discoptsys}, Remark \ref{Rem:Diff3} below.

Until recent it has been believed that a simultaneous discretization of time and space variables would be way too costly since problems in $n+1$ dimension need to be solved, where $n$ denotes the space dimension. This has changed somehow since it is nowadays known that space-time discretizations yield good stability properties, can efficiently be used for model reduction and can also be treated by efficient numerical solvers, see \cite{TDCM14,DemGop11,c.schwabr.stevenson2009,r.andreev2016A,r.andreev2012,Yano14,c.mollet2016,j.henning.etal2019,palitta2019matrix}, just to name a few papers in that direction. However, the issues of a suitable discretization and the question if the arising higher-dimensional problem can efficiently be solved remain. Of course, also for space-time approaches different from ours, there are also efficient numerical solvers known, see e.g.\ \cite{MR3328986,MR2872584,MR2516528}.

\paragraph{Model problem} 
We consider the following PDE-constrained optimal control problem. 

\begin{prob}[Model problem in classical form]\label{prob:classicalFormulationOptContr} 
Let $I=(0,T)\subset \mathbb{R}$, $0<T<\infty$ and $\Omega \subset \mathbb{R}^n$ be a bounded open Lipschitz domain. The normal vector of $\partial \Omega \coloneqq \Gamma $ is denoted by $\nu (x) \in \mathbb{R}^n$ for all $x\in \Gamma$. 

The state space $\cY$ consists of mappings $y:I\times \Omega \to \mathbb{R}$, the control space $\cU$ of functions $u: I \times \Omega \to \mathbb{R}$. We are interested in determining a control $\overline{u}\in\cU$ and a corresponding state $\overline{y} \in\cY$ that solve the following optimization problem:
\begin{align*}
	\min\limits_{ \left(y,u\right) \in\cY\times\cU } J(y,u) \coloneqq 
	\ts{\frac{1}{2}}    \int\limits_{\Omega} \lvert y(T,x) - y_d(x) \rvert^2 \diff x  + 
	\ts{\frac{\lambda}{2}}  \int\limits_{I} \int \limits_{\Omega} \lvert u(t,x) \rvert^2 \diff x \diff t 
\end{align*}
s.t. \vspace*{-8pt}
\begin{empheq}[box=\fbox]{alignat=2}\label{eq:pde}
	 {\dot y}(t,x)- \Laplace y(t,x) 
	 	& = {R}u(t,x) 
	 	&&\qquad \quad \text{in } I \times \Omega, \notag\\
	  \partial_{\nu} y(t,x)+  \mu(x) \cdot y(t,x)
	  	&=\eta(t,x) 
	  	&&\qquad \quad \text{in } I \times \Gamma,  \\ 
	  y(0,x) 
	  	& = {0} 
		&& \qquad \quad \text{in } \Omega, \notag  
\end{empheq}
where the functions $\mu: \Omega \to \mathbb{R}$, $\eta :I \times \Gamma \to \mathbb{R}$ {and $y_d: \Omega \to \mathbb{R}$} as well as a scalar $\lambda > 0$ are given. {Moreover, $R$ is a linear operator, whose role will be described below.} 
We shall always assume that $\mu(x)>0$\footnote{{We note that we do not need strict positivity in order to ensure well-posedness. But it allows us to use energy norms in the sequel.}} for all $x \in \Omega$ a.e..
\end{prob}

\begin{remark}\label{Rem:1}(a) 
	We could easily extend to a cost function of the form 
	\begin{align*}
		J(y,u) &= \ts{\frac{\omega_1}{2}} \| y-y_d\|_{L_2(I; L_2(\Omega))}^2 
			+ \ts{\frac{\omega_2}{2}} \| y(T)-y_d(T)\|_{L_2(\Omega)}^2 
			+ \ts{\frac{\omega_3}{2}}   \| u\|_{L_2(I; L_2(\Omega))}^2, 
	\end{align*}
	with real constants $\omega_1$, $\omega_2\geq 0$, $\omega_1+\omega_2>0$, $\omega_3>0$  
	and $y_d:I\times\Omega\to\R$. \\
	(b) The extension to inhomogeneous initial conditions $y(0,x)=y_0$ and other types of boundary conditions follows standard lines, e.g.\ \cite{c.schwabr.stevenson2009} and Remark \ref{Rem:ICs}. \\
	(c) In the first preprint version of this paper, we considered box constraints for the control. In order to discuss the analysis concerning well-posedness and convergence in full detail, we decided to devote control constraints to future research.
\end{remark}

\paragraph{Organization of the paper} 
The remainder of this paper is organized as follows. In Section \ref{Sec:2}, we recall and collect some preliminaries on {PDE-}constrained optimization problems in reflexive Banach spaces and on space-time variational formulations of parabolic PDEs. The space-time variational formulation of the optimal control problem under consideration is developed in Section \ref{Sec:3}. In particular, we derive necessary and sufficient optimality conditions. Section \ref{Sec:4} is devoted to the space-time discretization of the PDE, the discretization of the control as well as of the adjoint problem. The latter one turns out to be much simpler in our space-time context than in the semi-discrete setting as we obtain a linear system whose matrix is just the transposed of the matrix appearing in the primal problem. The fully discretized optimal control problem is then solved {numerically}. We report on our numerical experiments in Section \ref{Sec:5} and conclude by a summary, conclusions and an outlook in Section \ref{Sec:6}.

\section{Preliminaries}\label{Sec:2}
Let us start by collecting some preliminaries that we will need in the sequel.

\subsection{Optimal control problems}\label{SubSec:2.1}
In this section, we recall the abstract functional analytic framework for optimal control problems in reflexive Banach spaces\footnote{Concerning the chosen model problem we will deal with real Hilbert spaces. However, we will not identify these Hilbert spaces with their dual spaces, which is the reason why we describe the general optimal control framework for reflexive Banach spaces.} which we will later apply within the space-time setting. The consideration of control and/or state constraints is devoted to future research.

\begin{prob}\label{prob:OptAllg}
Let $\cY$, $\cU$, $\cZ$ be some real {reflexive} Banach spaces. Given an \emph{objective function} $J:\cY \times \cU \to \mathbb{R}$ and the \emph{state operator} $e:\cY\times \cU \to \cZ'$, we consider the constrained optimization problem 
\begin{align*}
	\min\limits_{ (y,u) \in \cY\times \cU } J(y,u)
	\quad
	\text{subject to (s.t.) the {constraint} } e(y,u)=0.
\end{align*}
\end{prob}

\begin{remark}\label{rem:constraint}
Note, that $e(y,u)=0$ is an equation in the dual space $\cZ'$ of $\cZ$. {Since we consider reflexive Banach spaces, it holds $\cZ''\cong\cZ$. Therefore, the constraint is to be interpreted as} 
	$\langle e(y,u), z\rangle_{\cZ'\times{\cZ}} = 0$ 
	for all $z\in{\cZ}$, 
where $\langle \cdot, \cdot\rangle_{\cZ'\times{\cZ}}$ is the duality pairing, {and the adjoint state will be in $\cZ$}.
\end{remark}

A pair {$( \overline{y},\overline{u})\in\cY \times \cU$} is called \emph{local optimum} of Problem \ref{prob:OptAllg} if 
\begin{align}\label{eq:lokOptimality}
	J(\overline{y},\overline{u}) \leq J(y,u) 
	\qquad 
	\forall (y,u)\in \mathcal{N}(\overline{y},\overline{u}) \cap ({\cY \times \cU}) \text{,}
\end{align}
for some neighborhood $\mathcal{N}(\overline{y},\overline{u})$ of $(\overline{y},\overline{u})$; the pair is called \emph{global optimum} of Problem \ref{prob:OptAllg} if  \eqref{eq:lokOptimality} is satisfied for all $(y,u)\in {\cY \times \cU}$. 

We will be investigating the well-posedness of such optimal control problems in a space-time variational setting. This requires first to study the well-posedness of the \emph{state equation} $e(y,u)=0$, namely the question if a unique state can be assigned to each admissible control. If so, one defines the \emph{control-to-state operator}
\begin{align}\label{eq:ControlToStateOperator}
	S: \cU \to \cY\text{, } u \mapsto y(u) = Su,
\end{align}
which allows one to consider the \emph{reduced objective function} 
	$\hat{J}: \cU \to \mathbb{R}\text{, }\hat{J}(u) \coloneqq J(Su,u)$ 
and the corresponding \emph{reduced problem}
\begin{align}\label{eq:reducedProblem}
	{\min\limits_{u\in \cU} \hat{J}(u)}.
\end{align}
We recall the following well-known result for later reference, \cite[Thm.\ 5.1]{j.delosreyes2015}, \cite[Thm.\ 1.46]{m.hinzer.pinnaum.ulbrichs.ulbrich2009}.

\begin{theorem}\label{satz:Existenz1} 
Let {$\cU \neq \emptyset$} and  $\hat{J}:\mathcal{U} \to \mathbb{R}$ be a weakly lower semi-continuous function such that there exists a constant $c>-\infty$ so that $\hat{J}(u)\geq c$  for all $u\in{\cU}$. Then, \eqref{eq:reducedProblem} has at least one solution $\overline{u}$. If $\hat{J}$ is in addition strictly convex, then the optimal solution is unique.
\end{theorem}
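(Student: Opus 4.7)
The plan is to apply the \emph{direct method of the calculus of variations}. The four steps are standard: construct a minimizing sequence in $\cU_{ad}$, extract a weakly convergent subsequence using weak sequential compactness, pass to the limit via weak lower semi-continuity of $\hat{J}$, and identify the weak limit as a minimizer that still lies in $\cU_{ad}$.

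First I would set $\alpha := \inf_{u\in\cU_{ad}} \hat{J}(u)$. Since $\cU_{ad}\neq\emptyset$ and $\hat{J}(u)\geq c>-\infty$ on $\cU_{ad}$, the value $\alpha\in[c,+\infty)$ is finite, so by the definition of the infimum there exists a minimizing sequence $(u_n)_{n\in\N}\subset \cU_{ad}$ with $\hat{J}(u_n)\to\alpha$.

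Next, the weak sequential compactness of $\cU_{ad}$ yields a subsequence (not relabelled) and some $\bar u\in \cU_{ad}$ with $u_n \rightharpoonup \bar u$ weakly in $\cU$. Applying the weak lower semi-continuity of $\hat{J}$ to this subsequence gives
\[
\hat{J}(\bar u) \,\leq\, \liminf_{n\to\infty} \hat{J}(u_n) \,=\, \alpha,
\]
while $\bar u\in\cU_{ad}$ forces $\hat{J}(\bar u)\geq \alpha$ by definition of the infimum. Hence $\hat{J}(\bar u)=\alpha$, so $\bar u$ is an optimal solution.

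For uniqueness under strict convexity I would argue by contradiction. Assume two distinct minimizers $\bar u_1\neq \bar u_2$ existed. Then, tacitly using convexity of $\cU_{ad}$ (which is implicit in the hypothesis and automatic in the applications of interest), the midpoint $\tfrac{1}{2}(\bar u_1+\bar u_2)\in\cU_{ad}$ would satisfy
\[
\hat{J}\!\left(\tfrac{1}{2}(\bar u_1+\bar u_2)\right)
\,<\, \tfrac{1}{2}\hat{J}(\bar u_1)+\tfrac{1}{2}\hat{J}(\bar u_2) \,=\, \alpha,
\]
contradicting $\alpha = \inf_{u\in\cU_{ad}}\hat{J}(u)$. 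No genuine analytical obstacle arises — this is a textbook application of the direct method — and the only step requiring attention is the implicit convexity of $\cU_{ad}$ used in the uniqueness part, which in the PDE-constrained setting of the paper is automatic since $\cU_{ad}$ will be a closed, bounded, convex subset of a reflexive Lebesgue-Bochner space.
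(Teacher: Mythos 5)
Your proof is correct and is the standard direct method; the paper itself does not prove Theorem~\ref{satz:Existenz1} at all, but merely recalls it as well known with a reference to \cite{j.l.lions1971}, so there is no authorial proof to compare against. The one point worth making explicit is the one you already flag: the uniqueness argument needs $\cU_{ad}$ to be convex so that $\tfrac12(\bar u_1+\bar u_2)\in\cU_{ad}$, and this is not among the stated hypotheses of the theorem. In the paper's application this is harmless, since $\cU_{ad}$ is the box-constraint set \eqref{eq:cUad}, whose convexity is established in Lemma~\ref{lem:KonvU} and which is used precisely this way in the proof of the existence proposition in Section~\ref{Sec:3}. Everything else --- finiteness of the infimum from the lower bound $c$, extraction of a weakly convergent subsequence with limit in $\cU_{ad}$ from weak sequential compactness, and passage to the limit via weak lower semi-continuity --- is exactly as it should be.
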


Necessary {first order} optimality conditions for optimal control problems are based upon {the Euler--Lagrange equation $\hat{J}^\prime (\overline{u}) = 0$}, e.g.\ \cite{f.troeltzsch2009}. This, however,  involves the derivative of $\hat{J}$, which is often difficult to determine exactly. The well-known way-out is through the adjoint problem. In fact, if $e_y(Su,u): \cY \to \cZ'$ (the partial derivative of $e(\cdot,\cdot)$ w.r.t.\ $y$) is a bijection, then, 
	$\hat{J}'(u) = J_u(Su,u)-e_u(Su,u)^*\left(e_y(Su,u)^*\right)^{-1}J_y(Su,u)$,  
for any $u\in \cU$, where $e_y(Su,u)^*$ and $e_u(Su,u)^*$ denote the adjoint operators of $e_y(Su,u)$ and $e_u(Su,u)$, respectively. In order to avoid the determination of the inverse of the adjoint $e_y(Su,u)^*$, one considers the \emph{adjoint equation}
\begin{align}\label{eq:adjointEquation}
	e_y(y,u)^*{z}={-}J_y(y,u),
\end{align}
whose solution {${z} \in \cZ$} is called \emph{adjoint state}. Then, 
\begin{align}\label{eq:reduzierteZielfunktionAdjoint}
	\hat{J}^\prime (u)
	= J_u(Su,u)+e_u(Su,u)^*{z}.
\end{align}

\begin{theorem}[KKT system] \label{theo:KKTBoxConstraint} 
Let $\overline{u}$ be a solution of  \eqref{eq:reducedProblem} and $\overline{y}\coloneqq S\overline{u}$ the related state. Then, there exists an adjoint state {$\overline{{z}}\in \cZ$}, 
such that the following KKT system is satisfied for all $(t,x) \in I\times \Omega$ a.e.:%
\begin{subequations}\label{eq:kom}
	\begin{align}
		e(\overline{y},\overline{u})&=0, 
			&& \label{eq:komp4}\\
		e_y(\overline{y},\overline{u})^*\overline{{z}}
		&={-}J_y(\overline{y},\overline{u}),  
			&& \label{eq:komp5}\\
		e_u(\overline{y},\overline{u})^*\overline{{z}} 
		&= {-}J_u(\overline{y},\overline{u}). 
			&& \label{eq:cond_J_u} 
	\end{align}
\end{subequations}
\end{theorem}

The Lagrange function $\mathcal{L}: \cY \times \cU \times {\cZ} \to \mathbb{R}$ to Problem \ref{prob:OptAllg} reads
\begin{align*}
	\mathcal{L}(y,u,{z})
		\coloneqq  J(y,u) {+} \langle {z}, e(y,u) \rangle_{{\cZ} \times \cZ'}
\end{align*}
Then, (\ref{eq:kom}) can equivalently be written as $\nabla\mathcal{L}(\overline{y},\overline{{z}},\overline{{z}})=0$.

\subsection{Space-time variational formulation of parabolic problems}\label{sec:spacetime}
In order to detail the setting in \S\ref{SubSec:2.1} for the specific Problem \ref{prob:OptAllg} at hand, we review a variational formulation of the initial boundary value problem \eqref{eq:pde} in space and time, which yields the specific form of the state operator $e(\cdot,\cdot)$. To this end, {let $H\coloneqq L_2(\Omega)$, $G\coloneqq L_2(\Gamma)$, $V\coloneqq H^1(\Omega)$ and $V'$ be the dual of $V$ induced by the $H$-inner product. Then, we denote the Lebesgue--Bochner spaces by $\cH\coloneqq L_2(I;H)$, $\cG\coloneqq L_2(I;G)$, $\cV\coloneqq L_2(I;V)$ and $\cV'\coloneqq L_2(I;V')$. Moreover, denoting by $\langle\cdot,\cdot\rangle_{V'\times V}$ the duality pairing in space only, we obtain inner products and duality pairing in time and space as
\begin{align*}
	(u,v)_\cX\coloneqq  \int_I (u(t),v(t))_X\dt,
	\qquad
	\langle u,v\rangle_{\cV'\times\cV} \coloneqq  \int_I \langle u(t), v(t)\rangle_{V'\times V} \dt
\end{align*}
for the respective $u$ and $v$ and $X\in\{ V',H,V\}$, $\cX\in\{\cV',\cH,\cV\}$, respectively.
} 

{Then, we} start by testing the first equation in \eqref{eq:pde} with functions ${z}(t)\in {V}$, $t\in I$ a.e., integrate over time, perform integration by parts in space and insert the Robin boundary condition of \eqref{eq:pde}. Denoting by $a: {V}\times{V}\to \mathbb{R}$ the bilinear form in space, i.e., {$a(\phi,\psi) \coloneqq (\nabla \phi, \nabla \psi)_H + (\mu\,\phi, \psi)_G$}, we get 
\begin{equation} \label{eq:ARWPIntegriertSpaceMitBilA}
		\langle {\dot y}(t), {z}(t) \rangle_{{V'\times V}}
			+  a(y(t),{z}(t))  
			= {\langle Ru(t),z(t)\rangle_{V'\times V} +  (\eta(t), z(t))_G},  
\end{equation}
for $t\in I$ a.e. To obtain a variational formulation in space and time we integrate \eqref{eq:ARWPIntegriertSpaceMitBilA} in time and obtain
\begin{equation} 
	\label{eq:varformSTprev}
	{ 
	\langle \dot{y}, z\rangle_{\cV'\times\cV} }
	+ \!\int\limits_{I}\!\! a(y(t),{z}(t)) \diff t 
	= {\langle Ru,z\rangle_{\cV'\times \cV} +  (\eta, z)_\cG .}
\end{equation}
The trial space for the {the state} $y$ is a Lebesgue--Bochner space defined as 
\begin{align}\label{eq:cY}
	\mathcal{Y} \coloneqq \lbrace y \in L_2(I;{V}): {\dot{y}} \in  L_2(I;{V}'), {y(0)=0} \rbrace 
		= L_2(I;{V})\cap  {H^1_{(0)}(I;{V}'}).
\end{align}
{As in \cite{k.urbana.t.patera2012,k.urbana.t.patera2014}, we choose the norms 
\begin{align*}
	\|y\|_\cY^2\coloneqq  \| \dot{y}\|_{\cV'}^2 + \| y\|_\cV^2+\| y(T)\|_H^2 
	\quad\text{and}\quad 
	\|\phi\|_V^2:=a(\phi,\phi), 
\end{align*}
but other equivalent norms can also be considered. The test space reads}
\begin{align}\label{eq:cZ}
	\mathcal{Z}\coloneqq  {\cV=L_2(I;V),
	\qquad
	\| \cdot\|_\cZ\coloneqq  \| \cdot\|_\cV.
	}
\end{align}
For the well-posedness of \eqref{eq:varformSTprev} (see \cite{c.schwabr.stevenson2009}), we need $Ru\in\cV'=L_2(I; V')=\cZ'$. However, the definition of the cost function $J$ in Problem \ref{prob:classicalFormulationOptContr} requires
\begin{align}\label{eq:cU}
	\cU \coloneqq  {\cH=L_2(I; H)},
	\qquad
	\| \cdot\|_\cU\coloneqq  \| \cdot\|_{\cH}.
\end{align}%
Thus, we can now detail the role of the linear mapping $R$, namely $R:\cH\to\cV'$ (which could here also be just the identity). 
We introduce the bilinear form $b: \cY\times \cZ \to\R$ and the linear form $h\in\cZ'$ by
\begin{align*}
	b(y,z) \coloneqq  \langle \dot y, z\rangle_{\cV'\times\cV}
	+ \int\limits_{I} a(y(t),{z}(t)) \diff t,  
	\quad
	\,\, h(z)\coloneqq (\eta,z)_\cG,
\end{align*}
so that \eqref{eq:varformSTprev} equivalently can be written as
\begin{align}\label{eq:varformST}
	b(y,z)=\langle Ru,z\rangle_{\cV'\times\cV} + h(z) \qquad \forall z\in\cZ.
\end{align}
Obviously, \eqref{eq:varformST} is a variational problem {of the form \eqref{eq:Prob}, where the right-hand side is a linear form in $\cZ'$ for all $u\in\cU$. 
For later reference, it will be convenient to reformulate \eqref{eq:varformST} in operator form. To this end, we define
\begin{align}
	B: \cY\to\cZ',	&\quad \langle By, z\rangle_{\cZ'\times\cZ} \coloneqq  b(y,z),  
	\label{eq:DefOp}
\end{align}
so that \eqref{eq:varformST} reads $By=Ru+h$. If we define the differential operator in space as $A_x:V\to V'$ by $\langle A_x\phi,\psi\rangle_{V'\times V}:=a(\phi,\psi)$ with its space-time extension $A:\cV\to\cV'$ defined as $\langle A y, \delta y\rangle_{\cV'\times\cV}:=\int_I a(y(t),\delta y(t))\, dt$, then we get the representation $By=\dot{y}+A y$.}

\subsubsection*{Well-posedness of the parabolic problem}
The proof of the well-posedness of the variational form \eqref{eq:varformST} for any given $u\in\cU$ basically follows the lines of \cite{r.dautrayj.lions1992,c.schwabr.stevenson2009,k.urbana.t.patera2012}, namely by verifying the conditions of the Banach--Ne\v{c}as theorem.
For the Robin data we make the usual assumptions $\mu\in L_\infty(I\times\Gamma)$ and $\eta\in \cG=L_2(I;G)$. The surjectivity is shown by proving the convergence of a Faedo--Galerkin approximation, \cite[App.\ A]{c.schwabr.stevenson2009}. Inf-sup-condition and boundedness can be derived by detailing primal and dual supremizers. 

\begin{proposition}\label{Prop:WellPosed}
	The problem \eqref{eq:varformST} is well posed with
	\begin{align}
		\inf_{y\in\cY} \sup_{z\in\cZ} \frac{b(y,z)}{\| y\|_\cY\, \| z\|_\cZ}
		&= \inf_{z\in\cZ}  \sup_{y\in\cY} \frac{b(y,z)}{\| y\|_\cY\, \| z\|_\cZ}
			\label{eq:infsup} \\
		&= \sup_{y\in\cY} \sup_{z\in\cZ} \frac{b(y,z)}{\| y\|_\cY\, \| z\|_\cZ}
		= \sup_{z\in\cZ}  \sup_{y\in\cY} \frac{b(y,z)}{\| y\|_\cY\, \| z\|_\cZ}
		= 1, \nonumber
	\end{align}
	in particular $\| B\|_{\cY\to\cZ'} = \| B^*\|_{\cZ\to\cY'} = \| B^{-1}\|_{\cZ'\to\cY} = \| B^{-*}\|_{\cY'\to\cZ} = 1$.
\end{proposition}
\begin{proof}
	The proof closely follows the lines of \cite[Thm.\ 5.1]{c.schwabr.stevenson2009}, \cite[Prop.\ 1]{k.urbana.t.patera2012} and \cite[Prop.\ 2.6]{k.urbana.t.patera2014}. In fact, we can identify primal and dual supremizers for given $z\in\cZ$ and $y\in\cY$, respectively, as follows
	\begin{align*}
		\cZ\ni s_y
		&:= \arg \sup_{\delta z\in\cZ} \frac{b(y,\delta z)}{\| \delta z\|_\cZ}
		= A^{-1} B y 
		= A^{-1}\dot{y} + y, \\
		\cY\ni s_z
		&:= \arg \sup_{\delta y\in\cY} \frac{b(\delta y,z)}{\| \delta y\|_\cY}
		= B^{-1} A z. 
	\end{align*}
	In addition, $\| s_y\|_{\cZ}^2 = \| A^{-1}\dot y + y\|_\cV^2 = \| \dot y\|_{\cV'}^2 + \| y\|_\cV^2 + \| y(T)\|_H^2 =\| y\|_\cY^2$ and $\| s_z\|_{\cY} = \| \dot{s_z} + As_z\|_{\cV'} = \| Az\|_{\cV'}= \| z\|_\cZ$, which completes the proof.
\end{proof}

\begin{remark}
	Even though we have proven optimal stability and continuity, we will later also need the general case, in which we have
	\begin{align}
		\| B\|_{\cY\to\cZ'} = \| B^*\|_{\cZ\to\cY'} =: \gamma_B,\quad
		& \| B^{-1}\|_{\cZ'\to\cY} = \| B^{-*}\|_{\cY'\to\cZ} = \ts{\frac1\beta}.
	\end{align}
\end{remark}

\begin{remark}[Inhomogeneous initial conditions]\label{Rem:ICs}
	As already mentioned in Remark \ref{Rem:1}, we restrict ourselves to homogeneous initial conditions only for convenience of the presentation. In fact, for $y_0\not=0$, we would set $\cY:=L_2(I;V)\cap H^1(I;V')$, the test space would be $\cZ:= L_2(I;V)\times H$ and bilinear and linear forms read for $y\in\cY$, $z=(z_1,z_2)\in\cZ$
	\begin{align*}
		b(y, (z_1,z_2))&\coloneqq 
			\langle \dot y, z_1\rangle_{\cV'\times\cV} + \int\limits_{I} a(y(t),z_1(t)) \diff t + (y(0),z_2)_H,\\
		f((z_1,z_2);u) &\coloneqq
			\langle Ru,z_1\rangle_{\cV'\times\cV} + h(z_1) + (y_0,z_2)_H,
	\end{align*}
yielding a state equation of the form \eqref{eq:Prob}. Hence, inhomogeneous initial conditions can be treated analogously, just the notation becomes a bit more heavy, \cite{c.schwabr.stevenson2009}.
\end{remark}

\subsubsection*{Comparison with other space-time methods}
As already mentioned in the introduction, our approach is somehow different as existing ones in the literature. We are now going to describe the differences concerning the formulation of the state equation in more detail.

\begin{remark}[Differences to existing space-time methods]\label{Rem:Diff1}\ \hfill
\begin{compactenum}[(a)]
\item In \cite{MR2407012,MR2874969,MR2861431}, Meidner, Neitzel and Vexler use (almost) the same trial space $\cY$ as in \eqref{eq:cY}, namely $\widetilde\cY:=L_2(I;V)\cap H^1(I;V')$, but impose the initial condition $y(0)=y_0$\footnote{Recall, that we have chosen homogeneous initial conditions $y_0=0$ only for simplicity of exposition, see Remark \ref{Rem:ICs}.} in strong form. The arising problem is not of the form \eqref{eq:Prob}. In fact, the well-posedness does not follow from the Banach--Ne\v{c}as theorem but with techniques from semigroup theory, \cite{r.dautrayj.lions1992}. This requires $y_0\in V$ ($y_0\in H$ is the minimal requirement) and additional regularity, \cite[Prop.\ 2.1]{MR2407012}. In fact, the right-hand side is required to be in $L_2(I;H)$ and the solution is in $L_2(I;V\cap H^2(\Omega))\cap H^1(I;H)\hookrightarrow C(\bar{I};V)$, which is significantly stronger than $\cY$ defined in \eqref{eq:cY}.\\
Moreover, treating the initial condition as in \cite{MR2407012,MR2874969,MR2861431} allows to use $\widetilde\cY$ also as test space, which is another reason for the additional smoothness, but which yields a Galerkin discretization instead of a Petrov--Galerkin one, see below. 
\item In \cite{MR3343358,MR4076464}, von Daniels, Hinze and Vierling use the same trial space $\widetilde\cY$ for the state equation as \cite{MR2407012,MR2874969,MR2861431}, but impose the initial condition in a weak sense, \cite[(1.5)]{MR3343358}. Moreover, $\widetilde\cY$ is chosen also as test space (in a Galerkin spirit).
\item In the more recent paper, Langer, Steinbach, Tr\"{o}ltzsch and Yang use the same variational formulation as we do, \cite{MR4223221}. However, the initial condition is part of the definition of the trial space, which will be relevant for the adjoint problem.
\end{compactenum}
\noindent
In all cases, there are differences to our approach in the derivation/formulation of the adjoint equation and the adjoint state to be described in the next section.
\end{remark}

\section{Space-Time Variational Optimal Control Problem}\label{Sec:3}

Next, we formulate the optimal control problem in the variational space-time setting by specifying the above abstract framework. 
To do so, we are now going to derive a space-time variational formulation of Problem \ref{prob:classicalFormulationOptContr}. The state space is determined by the PDE, i.e., we choose $\cY$ defined in \eqref{eq:cY}. 
In view of Remark \ref{rem:constraint} and recalling that $\cZ'' \cong \cZ$, we are now in position to formulate the constraint in space-time variational form as follows
\begin{align}\label{eq:constraint-spacetime}
	\langle e(y,u), z\rangle_{\cZ'\times\cZ} \coloneqq   
		b(y,z) - \langle {R}u,z\rangle_{\cV'\times\cV}  - h(z),
	\qquad z\in\cZ,
\end{align}
i.e., $e(y,u)\coloneqq  By-Ru-h \in\cZ'$. Next, we can detail the control-to-state operator $S: \cU\to\cY$ as follows $Su = B^{-1} \left( {R}u+{h} \right)$. 
Finally, the objective function $J: \cY \times \cU \to \mathbb{R}$ in Problem \ref{prob:classicalFormulationOptContr} can now be written as
$J(y,u) = \frac{1}{2}  \lVert y(T) - y_d \rVert_{{H}}^2 + \frac{\lambda}{2}  \lVert u \rVert_{{\cH}}^2$,  where $\lambda > 0$ is the regularization parameter.

\subsection{Existence of an optimal solution}

\begin{prob}[Reduced problem]\label{prob:reducedProblemModelProblem} 
Find {a} control ${\bar{u}\in \cU}$ such that
\begin{align}\label{eq:hatJ}
	{\bar{u} = \arg}\min\limits_{u\in \cU} \hat{J}(u), 
	&\quad
	 \hat{J}(u)\coloneqq  \ts{\frac{1}{2}}  \lVert (Su)(T) - y_d \rVert_{{H}}^2 
	 	+ \ts{\frac{\lambda}{2}}  \lVert u \rVert_{\cH}^2 . 
\end{align}
\end{prob}
\begin{proposition}
	Problem \ref{prob:reducedProblemModelProblem} admits a unique solution.
\begin{proof}
	Since $\hat{J}$ is non-negative there exists a constant $c>-\infty$ so that $\hat{J}(u)\geq c$ for all $u\in\cU$. It remains to show that $\hat{J}$ is weakly lower semi-continuous. Since  $\hat{J}$ is easily seen to be strictly convex and continuous (by continuity of $S$ and the norms), Theorem \ref{satz:Existenz1} proves the claim.
\end{proof}
\end{proposition}
\subsection{First order necessary optimality conditions}\label{Sec:3.3}
Adopting the previous notation, we start by detailing the Lagrange function $\mathcal{L}: {\cY \times \cU \times \cZ} \to \mathbb{R}$ for Problem \ref{prob:classicalFormulationOptContr}, namely 
\begin{align}\label{def:LagrangeModelProblem_general}
	\mathcal{L}({y,u,z})
	&=\ts{\frac{1}{2}}  \lVert y(T) - y_d \rVert_{{H}}^2 
		+ \ts{\frac{\lambda}{2}}  \lVert u \rVert_{\cH}^2
 		+\langle {z}, By - {R}u - {h}\rangle_{{\cV \times \cV'}}  .
\end{align}
The partial derivatives can easily be derived as follows: $\mathcal{L}_z({y,u,z}) =   B{y}-{R}{u} -{h}$, $\mathcal{L}_y({y,u,z})=  D{y} - g  {+} B^*{z}${, where we introduce the bilinear form $d:\cY\times \cY\to\R$, $d(y,\delta y)\coloneqq (y(T),\delta y(T))_H$ and the associated operator\footnote{By the Cauchy-Schwarz inequality, we easily see that  $\|D\|_{\cY\to\cY'}\le 1$.} $D:\cY\to\cY'$, $ \langle Dy,\delta y\rangle_{\cY'\times\cY}\coloneqq d(y,\delta y)$ as well as the functional $g\in \cY'$, $g(\delta y) \coloneqq (y_d,\delta y(T))_H$. {Finally, for $\delta u\in\cU$, we have $\mathcal{L}_u(y,u,z)[\delta u] = \lambda\, (u,\delta u)_\cH - \langle z, R\delta u\rangle_{\cV\times\cV'}$. Hence, we obtain the following first order optimality (KKT) system: Find $\left( \overline{y}, \overline{z}, \overline{u} \right) \in \cY \times\cZ \times \cU$ such that}
{
\begin{subequations}\label{eq:KKT2}
  	\allowdisplaybreaks
		\begin{align}
			\label{eq:KKT2:a}
			b(\overline{y},\delta z) 
				&- \langle R\overline{u},\delta z\rangle_{\cV'\times\cV}
					&&\kern-38pt= h(\delta z) 
					& \forall \delta z\in\cZ, \\
			\label{eq:KKT2:b}
			d( \overline{y}, \delta y)
				&+b(\delta y,\overline{z}) 
					&&\kern-38pt= g(\delta y) 
					& \forall \delta y\in\cY, \\
			\label{eq:KKT2:c}
			\lambda ( \overline{u}, \delta u)_\cH 
				&- \langle R\delta u, \overline{z}\rangle_{\cV'\times\cV}
					&&\kern-38pt= 0
					& \forall \delta u \in \cU.
		\end{align}
\end{subequations}%
Let us further detail the \emph{gradient equation} \eqref{eq:KKT2:c}. Denote by $R^*:\cV\to\cH$ the adjoint operator of $R$ defined by $(R^*v,h)_\cH := \langle R h, v\rangle_{\cV'\times\cV}$ for $v\in\cV$ and $h\in\cH$. Then, \eqref{eq:KKT2:c} reads $\lambda ( \overline{u}, \delta u)_\cH - (R^*\overline{z},\delta u)_\cH=0$,  which means that we can derive a relation of the optimal control $\bar{u}$ and the optimal adjoint state $\bar{z}$, namely 
\begin{align}\label{Eq:uz}
	\bar{u} = \lambda^{-1}\, R^* \bar{z}.
\end{align}
Then, we can formulate the KKT conditions as follows.} 

\begin{proposition}[Optimality (KKT) system]\label{propos:KKTModelProblem_general} 
	Let $\left( \overline{y}, \overline{u} \right) \in \cY \times \cU$ be an optimal solution of Problem \ref{prob:classicalFormulationOptContr}.  {Then, there exists an adjoint state $\overline{z} \in\cZ$} such that the following optimality system holds:
	\begin{subequations}\label{eq:KKT1}
		\begin{align}
			B\overline{y} - R \overline{u}
					&= {h}		
					&& \text{ in } \cZ' 	
					& \text{\emph{(state equation)}}, \label{eq:KKT1:a} \\
			{D\overline{y} + B^*\overline{z}}
					&= {g }	
					&& \text{ in } \cY'	
					& \text{\emph{(adjoint equation)}},\label{eq:KKT1:b}\\
			 \lambda \overline{u} {-} {{R^*}  \overline{z}} 
					&= {0} && \text{ in } \cU'
					& \text{\emph{(gradient equation)}}, \label{eq:KKT1:c}
		\end{align}
	\end{subequations}
	{or, in operator form
	\begin{align*}
		\begin{pmatrix} 
			D & B^* & 0 \\
			B & 0 & -R \\
			0 & -R^* & \lambda I
		\end{pmatrix}
		\begin{pmatrix} \bar y \\ \bar z \\ \bar u \end{pmatrix}
		= \begin{pmatrix} g \\ h \\ 0 \end{pmatrix}.
	\end{align*}}
\end{proposition}

Setting $P:=RR^*:\cV\to\cV'$, inserting \eqref{Eq:uz} into \eqref{eq:KKT2} yields the reduced first order optimality system for determining $(\overline{y},\bar{z})\in\cY\times\cZ$ such that
\begin{subequations}\label{eq:KKT3}
\allowdisplaybreaks
		\begin{align}
			\label{eq:KKT3:a}
			b(\overline{y},\delta z) 
				&- \lambda^{-1} \langle P \bar{z},\delta z\rangle_{\cV'\times\cV}
					&&\kern-38pt= h(\delta z) 
					&& \forall \delta z\in\cZ, \\
			\label{eq:KKT3:b}
			d( \overline{y}, \delta y)
				&+b(\delta y,\overline{z}) 
					&&\kern-38pt= g(\delta y) 
					&& \forall \delta y\in\cY, 
			\end{align}
\end{subequations}%
or, in operator form
\begin{align}\label{eq:reducedoptimal}
	L \begin{pmatrix} \bar{y} \\ \bar{z} \end{pmatrix}
	= \begin{pmatrix} g \\ h \end{pmatrix},
	\quad
	L:= \begin{pmatrix} D & B^* \\ B & -\lambda^{-1} P \end{pmatrix}: 
	\cW:=(\cY\times\cZ)\to\cW'.
\end{align}
}%
From \eqref{eq:KKT2:b} we see that the adjoint problem arises from the primal one by exchanging the roles of trial and test spaces -- and by a different right-hand side, of course. 


{
\begin{remark}[Well-posedness of the adjoint problem]
We recall from \S\ref{sec:spacetime} that the primal problem \eqref{eq:KKT2:a}  is well-posed, see e.g.\  \cite{r.dautrayj.lions1992,c.schwabr.stevenson2009}. This is a consequence of the Banach--Ne\v{c}as theorem and the fact that the inf-sup condition \eqref{eq:infsup} holds. Due to its specific form, this immediately implies well-posedness also of the adjoint problem \eqref{eq:KKT2:b}, even with the same inf-sup constant as for \eqref{eq:KKT2:a} .
\end{remark}
}

\begin{remark}
Due to the convexity of the objective function as well as the linearity of the state equation, the  Problem \ref{prob:classicalFormulationOptContr} is convex. Hence, every {solution of} \eqref{propos:KKTModelProblem_general} is {a global} optimal solution of the problem, \cite{f.troeltzsch2009}.
\end{remark}

\begin{theorem}[Well-posedness of the optimality system]\label{Thm:OptSystem}
	The first order optimality system \eqref{eq:reducedoptimal} is well-posed for all $\lambda>0$ with  
	\begin{align*}
		L^{-1} =
		\begin{pmatrix}
			\lambda^{-1} B^{-1} P C^{-1}
			& B^{-1} - \lambda^{-1} B^{-1} P C^{-1} D B^{-1} \\
			C^{-1} 
			& -C^{-1} D B^{-1}
		\end{pmatrix}
	\end{align*}
	with $C:= B^{*} + \lambda^{-1} DB^{-1} P: \cZ\to\cY'$. For $\gamma_P:=\| P\|_{\cV\to\cV'}$, we have 
	\begin{align}\label{eq:estLinverse}
		\| L^{-1} \|_{\cW'\to\cW} \le
		\ts{\frac{\gamma_P+\lambda\beta + \lambda\beta^2}{\lambda\beta^3}}. 
	\end{align}
\end{theorem}
\begin{proof}
	The fact $LL^{-1}=L^{-1}L=I$ can be verified by straightforward calculations, as long as $L^{-1}$ exists. This, in turn, boils down to the existence of $C^{-1}$. In order to show this, note that $C=B^*(I+\lambda^{-1} B^{-*}DB^{-1} P) =: B^* (I+\lambda^{-1}K)$ and $B^*$ is invertible. The operator $K:\cZ\to \cZ$ is self-adjoint since
	\begin{align*}
		(Kz, \delta z)_\cZ
		&= \langle B^{-*} DB^{-1} P z, P \delta z\rangle_{\cZ\times \cZ'}
		= \langle P z,  B^{-*} D B^{-1}  P \delta z\rangle_{\cZ'\times \cZ}
		= (z, K \delta z)_\cZ
	\end{align*}
since $D^*=D$. Hence, the spectrum $\sigma(K)\subset\R_0^+$ is contained in the non-negative reals. This implies $\| (I+\lambda^{-1} K)^{-1}\|_{\cZ\to\cZ} \le 1$ for all $\lambda >0$, so that $\| C^{-1}\|_{\cY'\to\cZ} \le  \| B^{-*}\|_{\cY'\to\cZ} = \frac1\beta$. 
In order to bound $\| L^{-1} \|_{\cW'\to\cW}$, let $(h,g)^T\in\cW$. Then, 
\begin{align*}
	\left\| L^{-1} (h,g)^T \right\|_{\cY\times\cZ}
	&\le \textstyle{\frac1\lambda} \| B^{-1} P C^{-1}\|_{\cY'\to\cY} \, \| h- D B^{-1} g\|_{\cY'}
		+ \| B^{-1}\|_{\cZ'\to\cY} \| g\|_{\cZ'}\\
	&\qquad + \| C^{-1}\|_{\cY'\to\cZ}  \| h- D B^{-1} g\|_{\cY'} \\
	&\kern-60pt\le \| C^{-1}\|_{\cY'\to\cZ}  (\textstyle{\frac{\gamma_P}\lambda} 
			\| B^{-1} \|_{\cZ'\to\cY}+1) \| h- D B^{-1} g\|_{\cY'}
		 + \| B^{-1}\|_{\cZ'\to\cY} \| g\|_{\cZ'}\\
	&\kern-60pt\le \textstyle{\frac1\beta} (\textstyle{\frac{\gamma_P}{\lambda\beta}} +1)
		(\| h\|_{\cY'} + \| D\|_{\cY\to\cY'}  \| B^{-1}\|_{\cZ'\to\cY} \| g\|_{\cZ'})
		+  \textstyle{\frac1\beta}  \| g\|_{\cZ'} \\
	&\kern-60pt\le \textstyle{\frac1\beta} (\textstyle{\frac{\gamma_P}{\lambda\beta}} +1) \| h\|_{\cY'}
		+ \big( \textstyle{\frac1{\beta^2}} (\textstyle{\frac{\gamma_P}{\lambda\beta}} +1)
			+ \textstyle{\frac1\beta} \big) \| g\|_{\cZ'} \\
	&\kern-60pt\le \textstyle{\frac1\beta} \max\{ \textstyle{\frac{\gamma_P}{\lambda\beta}} +1,
		\textstyle{\frac1\beta} \big( \textstyle{\frac{\gamma_P}{\lambda\beta}} +1\big) +1\}
		(\| g\|_{\cZ'} + \| h\|_{\cY'})\\
	&\kern-60pt= \Big( \textstyle{\frac1{\beta^2}} \big( \textstyle{\frac{\gamma_P}{\lambda\beta}} +1\big) +1\Big)
			(\| g\|_{\cZ'} + \| h\|_{\cY'})
	= \frac{{\gamma_P}+\lambda\beta + \lambda\beta^2}{\lambda\beta^3} (\| h\|_{\cY'} + \| g\|_{\cZ'}),
\end{align*}
	which proves the claim.
\end{proof}

\begin{corollary}\label{cor:inf_sup_os}
	For the space-time variational formulation \eqref{eq:varformST}, we have
	\begin{align}\label{eq:estinfsup}
		\| L^{-1} \|_{\cW'\to\cW} \le
			2  + \ts{\frac1\lambda},
	\end{align}
	so that the inf-sup-constant of the reduced optimality system \eqref{eq:reducedoptimal} is at least  
	$\frac{\lambda}{1+2\lambda}$.
\begin{proof}
		Since $\beta=1=\| P\|_{\cV\to\cV'}$ the claim follows from \eqref{eq:estLinverse}.
\end{proof}
\end{corollary}

\begin{remark}\label{Rem:optu}
	For the optimal control $\bar{u}$, it holds that
	\begin{align*}
		\| \bar u\|_\cU
		&= \textstyle{\frac1\lambda} \| P\bar{z}\|_\cH
		\le \textstyle{\frac1\lambda} \| \bar{z}\|_\cV 
		\le \textstyle{\frac1\lambda} \| (\bar{y},\bar{z})^T\|_{\cY\times \cZ} 
		\le \textstyle{\frac1\lambda}  \| L^{-1} \|_{\cW'\to\cW} \| (h,g)^T\|_{\cY'\times\cZ'}{.}
	\end{align*}
\end{remark}

\begin{remark}[Differences to existing space-time methods, continued]\label{Rem:Diff2}
We continue Remark \ref{Rem:Diff1} with highlighting the differences to previous publications.
\begin{compactenum}[(a)]
\item In \cite{MR2407012,MR2874969,MR2861431}, the adjoint problem is derived directly from the variational formulation, which means that the terminal condition is imposed in strong form. This necessarily implies that $\cZ=\cY$ is the space for the adjoint state. Moreover, the same high regularity requirements apply for the solution of the adjoint problem as for the primal one, \cite[Prop.\ 2.3]{MR2407012}. 
\item In \cite{MR3343358,MR4076464} the adjoint equation is derived by integration by parts in time, which is possible since $\cZ=\cY$. As in  \cite{MR2407012,MR2874969,MR2861431}, this implies high (and the same) regularity for $y$ and $z$, \cite[La.\ 3.2]{MR3343358}.
\item In \cite{MR4223221} the adjoint problem and also the gradient equation is derived in strong form, which is then formulated in space-time variational form. This results in a coupled space-time system where primal and adjoint state have the same regularity. Moreover, since the initial condition is imposed in the primal trial space, the terminal condition is part of the definition of the adjoint trial space. In our case, it holds $\overline{z} \in\cZ$, which allows weaker regularity for the adjoint state. 
\end{compactenum}
\noindent
The differences concerning discretization will be described in the next section.
\end{remark}

\section{Space-Time Discretization}\label{Sec:4}
In this section, we are going to describe a conforming discretization of the optimal control problem in space and time. We start by reviewing space-time Petrov--Galerkin methods for parabolic problems from \cite{r.andreev2012,k.urbana.t.patera2012,k.urbana.t.patera2014} and will extend this to a full space-time discretization of the optimal control problem at hand. This leads us to a tensorproduct-type discretization w.r.t.\ time and space variables. Of course, the approach is not restricted to tensorproducts; for example, one could also use unstructured space-time finite elements as in \cite{MR4223221}. However, w.r.t.\ stability and efficient solution of the fully discretized problems, the tensporproduct approach turned out to be very promising, see also \cite{stevenson2021waveletintime,j.henning.etal2019}.

\subsection{Petrov--Galerkin discretization of the PDE}\label{sec:PetrovGalerkin}

We consider and construct finite-dimensional spaces $\mathcal{Y}_\delta \subset \mathcal{Y}$ and $\mathcal{Z}_\delta \subset \mathcal{Z}$, where \textendash \ for simplicity \textendash \ we assume that $n_\delta\coloneqq \dim(\cY_\delta)=\dim(\cZ_\delta)$. The Petrov--Galerkin approximation to \eqref{eq:varformST} amounts finding $y_\delta\in\cY_\delta$ such that (for given $u\in\cU$ {to be discretized below})
\begin{align}\label{eq:allgProbPG}
	{b(y_\delta, z_\delta)
	= \langle Ru,z_\delta\rangle_{\cV'\times\cV} 
		+ h(z_\delta) } \qquad \forall z_\delta \in \mathcal{Z}_\delta.
\end{align}
We may think of $\delta=(\Dt,h)$, where $\Dt$ is the temporal and $h$ the spatial mesh width. 
We recall, that there are several ways to select such discrete spaces so that the arising discrete problem is well-posed {and stable in the sense of \eqref{eq:LBB}}. An overview of conditionally and unconditionally stable variants can be found in \cite{r.andreev2012,r.andreev2013A,r.andreev2016A}. In \cite{o.steinbach2015} a finite element approach is described. Moreover, the authors of \cite{k.urbana.t.patera2012,k.urbana.t.patera2014} show that linear ansatz and constant test functions w.r.t.\ time lead to the \emph{Crank--Nicolson} time integration scheme for the special case of homogeneous Dirichlet boundary conditions if the right-hand side is approximated with the trapezoidal rule. A similar approach, but for the case of Robin boundary conditions, is briefly presented in the sequel, where we basically follow \cite{r.andreev2012}. 
It is convenient (and, as we explained above, also efficient from the numerical point of view) to choose the approximation spaces to be of tensorproduct form,   
\begin{align}\label{eq:YdeltaZdelta}
	\mathcal{Y}_\delta = V_{\Dt} \otimes {V}_h, \quad 
	\cZ_\delta = { Q_{\Dt} \otimes {V}_h }
\end{align}
with the temporal subspaces $V_{\Dt} \subset H^1(I)$ und $Q_{\Dt} \subset L_2(I)$ as well as the spatial subspace ${V}_h \subset {V}=H^1(\Omega)$. Our particular choice is as follows: The time interval $I=(0,T)$ is discretized according to
\begin{align*}
	\mathcal{T}_{\Dt} \coloneqq \{0 \eqqcolon t^{(0)} < t^{(1)} < \cdots < t^{(K)} \coloneqq T \} 
	\subset [0,T]\text{,}\quad t^{(k)}=k \cdot \Dt,
\end{align*}
where $K \in \mathbb{N}$ denotes the number of time steps, i.e., $\Dt \coloneqq T/K$ is the time step size. The temporal subspaces $V_{\Dt}$, $Q_{\Dt}$ and the spatial subspace ${V}_h$ read
\begin{align*}
V_{\Dt} \coloneqq  \mathrm{span \ } \Theta_\Dt \subset H^1(I), \,\,\,
Q_{\Dt} \coloneqq  \mathrm{span \ } \Xi_\Dt \subset L_2(I), \,\,\,
{V_h} \coloneqq  \mathrm{span \ } \Phi_h \subset H^1(\Omega)
\end{align*}
with piecewise linear functions $\Theta_\Dt = \{\theta^k \in H^1(I): k= {1},..., K \}$, piecewise constants $\Xi_\Dt = \{\xi^\ell \in L_2(I): \ell=0, ..., K-1 \}$ in time and piecewise linear basis functions in space $\Phi_h = \{\phi_i \in H^1(\Omega): i=1,...,n_h\}$. Doing so, we obtain $\dim(\cY_\delta)=\dim(\cZ_\delta)=n_\delta={K}n_h$.
Such a Petrov--Galerkin discretization for solving \eqref{eq:allgProbPG} amounts determining 
\begin{align}\label{eq:PGapproximation}
	 \mathcal{Y}_\delta \ni y_\delta= \sum\limits_{k={1}}^K \sum\limits_{i=1}^{n_h} y_i^k \theta^k \otimes \phi_i\text{,}
\end{align}
with the coefficient vector ${\by}_\delta \coloneqq [y_1^{{1}},..., y_{n_h}^{{1}},...,y_1^K,..., y_{n_h}^K ]^\top \in \mathbb{R}^{n_\delta}$. 
We are going to derive the arising linear system of equations for \eqref{eq:allgProbPG} 
\begin{align}\label{eq:PrimalProblemMatrix}
	{\bB}_\delta  {\by}_\delta 
	=  {(\bR \bu)_\delta} {+ \bh_\delta}, 
\end{align}
with the stiffness matrix $\bB_\delta\in \mathbb{R}^{n_\delta\times n_\delta}$ and the {vectors $(\bR \bu)_\delta\in \mathbb{R}^{n_\delta}$}, $\bh_\delta \in \mathbb{R}^{n_\delta}$ {to be detailed next}. To this end, we use the basis functions for the test space and obtain for $\ell=0,...,K-1$ and $j=1,...,n_h$ 
\begin{align*}
	b(y_\delta, {\xi^\ell \otimes \phi_j} )
	&\kern-1pt=\kern-3pt \int\limits_I \langle  \dot{y}_\delta(t), \xi^\ell \otimes \phi_j \rangle_{{X}' \times {X}} 
		+ a(y_\delta(t),\xi^l \otimes \phi_j) \diff t 
		\\
	&\kern-0pt= \sum\limits_{k={1}}^K \sum\limits_{i=1}^{n_h} y_i^k 
		\left[ \langle \dot{\theta}^k \otimes \phi_i, \xi^\ell \otimes \phi_j \rangle_{{X}' \times {X}}
		+ a(\theta^k \otimes \phi_i,\xi^\ell \otimes \phi_j) \diff t \right] 
	\\
	&\kern-0pt= \sum\limits_{k={1}}^K \sum\limits_{i=1}^{n_h} y_i^k 
		\left[ \left( \dot{\theta}^k, \xi^\ell \right)_{L_2(I)}\, \left( \phi_i, \phi_j \right)_{H} 
		+\left( \theta^k, \xi^\ell \right)_{L_2(I)}\,  a(\phi_i, \phi_j) \right] 
	\text{.}
\end{align*}%
{Moreover, it holds $(\bR\bu)_\delta\coloneqq [ r^\ell_j(u)]_{\ell=0,...,K-1; j=1,...,n_h}\in\R^{n_\delta}$ and \linebreak$\bh_\delta \coloneqq [ h^\ell_j]_{\ell=0,...,K-1; j=1,...,n_h}\in\R^{n_\delta}$, where $r^\ell_j(u) \eqqcolon \langle Ru, \xi^\ell \otimes \phi_j\rangle_{\cV'\times\cV}$ and $h^\ell_j \eqqcolon h(\xi^\ell \otimes \phi_j)= (\eta, \xi^\ell \otimes \phi_j )_{\cG}$. The control $u$ will be discretized below}. 
In order to derive a compact form, we introduce a number of matrices
\begin{alignat*}{3} 
	&\underline{C}^{\mathrm{time}}_\Dt \coloneqq \left[ c_{k,\ell} \right]_{k={1}, \ell=0}^{K, K-1} 
	&&\ \in \mathbb{R}^{{K} \times K} 
	&& \qquad \text{with } \qquad c_{k,\ell} \coloneqq (\dot{\theta}^k, \xi^\ell )_{L_2(I)} ,\\[8pt]
	&\underline{N}^{\mathrm{time}}_\Dt \coloneqq \left[ n_{k,\ell} \right]_{k={{1}}, \ell=0}^{K, K-1} 
	&&\ \in \mathbb{R}^{{K} \times K} 
	&& \qquad \text{with } \qquad n_{k,\ell} \coloneqq (\theta^k, \xi^\ell )_{L_2(I)}, \\[8pt]
	&\underline{M}^{\mathrm{time}}_\Dt \coloneqq \left[ m_{k,\ell} \right]_{k=0, \ell=0}^{K-1, K-1} 
	&&\ \in \mathbb{R}^{K \times K} 
	&& \qquad \text{with } \qquad m_{k,\ell} \coloneqq (\xi^k, \xi^\ell )_{L_2(I)}, \\[8pt]
	&\underline{A}^{\mathrm{space}}_h \coloneqq \left[ a_{i,j} \right]_{i,j=1}^{n_h} 
	&&\ \in \mathbb{R}^{n_h \times n_h} 
	&& \qquad \text{with } \qquad a_{i,j} \coloneqq a(\phi_i, \phi_j ) ,\\[8pt]
	&\underline{M}^{\mathrm{space}}_h \coloneqq \left[ m_{i,j} \right]_{i,j=1}^{n_h} 
	&&\ \in \mathbb{R}^{n_h \times n_h} 
	&& \qquad \text{with } \qquad m_{i,j} \coloneqq (\phi_i, \phi_j )_{{H}}. 
\end{alignat*}
Based upon this, we obtain 
{
${\bB}_\delta \coloneqq 
		\underline{C}^{\mathrm{time}}_{\Dt} \otimes \underline{M}^{\mathrm{space}}_h 
			+ \underline{N}^{\mathrm{time}}_\Dt \otimes \underline{A}^{\mathrm{space}}_h
	\in \mathbb{R}^{n_\delta \times n_\delta}$.}

{
\begin{remark}\label{Rem:discInfSup}
	There are several uniformly inf-sup stable discretizations available, see e.g.\ \cite{r.andreev2013A}. For the above case, there is even an optimal discretization, i.e., where the inf-sup constant is unity, \cite{k.urbana.t.patera2012,k.urbana.t.patera2014}. In any case, we have (and shall assume in the sequel) that \eqref{eq:LBB} holds uniformly in $\delta\to 0$, possibly with discrete norms $\|\cdot\|_{\cY_\delta}$, $\|\cdot\|_{\cZ_\delta}$. 
\end{remark}
}

\subsection{Discretization of the control}
So far, we did not yet discretize the control $u\in \cU= L_2(I;H)$. {A natural choice seems to be $\cU_\delta \coloneqq  Q_\Dt \otimes V_h=\cZ_\delta$, but other choices are possible as well. Thus,} 
we consider 
{
\begin{equation}\label{eq:udelta}
	u_\delta\coloneqq  \sum_{k=0}^{K-1} \sum_{i=1}^{{n}_h} u_i^k\, \xi^k\otimes\phi_i,
	\qquad
	\bu_\delta \coloneqq  [u_i^k]_{k=0,...,K-1;\, i=1,...,n_h}\in\R^{K{n}_h}.
\end{equation}}%
The next step is to detail {$(\bR\bu)_\delta$} based upon this discretization. {We obtain for $\ell = 0,...,K-1$ and $j=1,..., n_h$}
\begin{align}
{r^\ell_j(u_\delta)} &=
	{\langle Ru_\delta, \xi^\ell \otimes \phi_j\rangle_{\cV'\times\cV}} 
	= \sum_{k=0}^{K-1} \sum_{i=1}^{{n}_h} u_i^k\, (\xi^k, \xi^\ell)_{L_2(I)}\, 
	{\langle}{R\phi_i}, \phi_j{\rangle_{V'\times V}} \nonumber\\
	&= [(\underline{M}^{\mathrm{time}}_\Dt\otimes\underline{N}^{\mathrm{space}}_h) \bu_\delta]_{\ell, j}
	{\eqqcolon [ \tilde\bM_\delta \bu_\delta]_{\ell, j}},
		\label{eq:DefbMdelta}
\end{align}
where $\underline{M}^{\mathrm{time}}_\Dt \in \R^{K\times K}$ is the identity (for piecewise constants) as introduced above and 
$\underline{N}^{\mathrm{space}}_h \coloneqq \left[ n_{i,j} \right]_{i=1,j=1}^{n_h,{n}_h}$ with $n_{i,j} \coloneqq {\langle R\phi_i, \phi_j\rangle_{V'\times V}}$\footnote{{For the common case $R=I$, we get $n_{i,j} = (\phi_i, \phi_j)_H$, i.e., $\underline{N}^{\mathrm{space}}_h=\underline{M}^{\mathrm{space}}_h$.}}.   
{Putting everything together, the discretized version of the primal problem \eqref{eq:PrimalProblemMatrix} reads
\begin{equation}\label{eq:PrimalProblemMatrixFull}
	{\bB}_\delta  {\by}_\delta 
	-\tilde{\bM}_\delta  {\bu}_\delta 
	= \bh_\delta.
\end{equation}}%
For later reference, we note that
\begin{align*}
	\| u_\delta\|_{\cH}^2
	= \sum_{k,\ell=0}^{K-1} \sum_{i,j=1}^{{n}_h} u_i^k\, u_j^\ell\, (\xi^k, \xi^\ell)_{L_2(I)}\, (\phi_i, \phi_j)_H 
	&= \bu_\delta^\top\, (\underline{M}^{\mathrm{time}}_\Dt\otimes{\underline{{M}}^{\mathrm{space}}_h})\, \bu_\delta\\
	&\eqqcolon \bu_\delta^\top \,{\bM}_\delta\,  \bu_\delta.
\end{align*}

\begin{remark}
	We stress the fact that we could use any other suitable discretization of the control, both w.r.t.\ time and space, in particular including adaptive techniques {or a discretization arising from implicitly utilizing the optimality conditions and the discretization of the state and adjoint equation, see e.g. \cite{Hinze2005}}. 
\end{remark}

\subsection{Petrov--Galerkin discretization of the adjoint problem}\label{sec:adjointSystemSpaceTime}
We are now going to derive the discrete form of the adjoint problem \eqref{eq:KKT1:b} or \eqref{eq:KKT2:b}. Since this problem involves the adjoint operator, it seems reasonable to use the same discretization, so that the {(matrix-vector form of the)} discrete problem amounts finding ${\bz_\delta}\in\R^{n_\delta}$ such that {(for given $y_\delta\in\cY_\delta$)}
\begin{align}\label{eq:AdjointProblemMatrix}
	\bB_\delta^\top {\bz_\delta} + {\bd_\delta(y_\delta) = \bg_\delta} ,
\end{align}	
{with $\bd_\delta(y_\delta)\in\R^{n_\delta}$, $\bg_\delta\in\R^{n_\delta}$, i.e., $d(y_\delta, \delta y_\delta)
					+b(\delta y_\delta ,z_\delta) 
					= g(\delta y_\delta)$ 
	for all $\delta y_\delta\in\cY_\delta$. Note, that} the stiffness matrix is the transposed of the stiffness matrix of the primal problem. The unknown coefficient vector {$\bz_\delta\in\R^{Kn_h}$ reads}
\begin{align}\label{eq:zdelta}
	\cZ_\delta \ni {z_\delta} 
	&
	{= \sum_{k=0}^{K-1}\sum_{i=1}^{n_h} z_i^k\, \xi^k\otimes\phi_i}, \qquad
	\bz_\delta \coloneqq  [z_i^k]_{k=0,...,K-1;\, i=1,...,n_h}\in\R^{K n_h}.
\end{align}
Let us now detail the {remaining terms $\bd_\delta(y_\delta)=[d_j^\ell(y_\delta)]_{\ell=1,...,K;\, j=1,...,n_h}\in\R^{n_\delta}$, $\bg_\delta=[g_j^\ell]_{\ell=1,...,K;\, j=1,...,n_h}\in\R^{n_\delta}$}. 
{For $\ell=1,...,K$ and $j=1,...,n_h$, we get
\begin{align*}
g^\ell_j &= g(\theta^\ell\otimes \phi_j) = (y_d, \theta^K(T)\otimes \phi_j)_H = \theta^K(T) \cdot (y_d, \phi_j)_H.
\end{align*}
Further, we abbreviate the coefficient vector of $y_\delta(T)$ in terms of the basis $\Phi_h$ as ${{\by}_{\delta}^K} \coloneqq [y_1^K,..., y_{n_h}^K ]^\top$ so that by \eqref{eq:PGapproximation} we obtain for $\ell=1,...,K$ and $j=1,...,n_h$ 
\begin{align*}
d_j^\ell(y_\delta) 
&= d(y_\delta, \theta^\ell\otimes \phi_j) 
= (y_\delta(T), \theta^K(T)\otimes \phi_j)_H \\
&= \theta^\ell(T)\cdot \theta^K(T) \cdot \sum\limits_{i=1}^{n_h} y_i^K  (\phi_i, \phi_j)_H 
= \delta_{\ell,K}\cdot [\underline{M}^{\mathrm{space}}_h \by_\delta^K ]_{j} \eqqcolon [\bD_\delta \by_\delta]_{\ell,j},
\end{align*}
where $\delta_{\ell,K}$ denotes the discrete Kronecker delta and we introduce
\begin{align*}
\bD_\delta\coloneqq \begin{pmatrix} 
	\boldsymbol{0} & \boldsymbol{0} \\ 
	\boldsymbol{0} & \underline{M}^{\mathrm{space}}_h 
	\end{pmatrix}\in\R^{n_\delta\times n_\delta}.
\end{align*}
We note that it holds $\theta^\ell(T)=\delta_{\ell,K}$, i.e., $\theta^\ell(T)=0$ for $\ell=1,...,K-1$ and $\theta^K(T)=1$. With this notation at hand, the fully discretized version of the adjoint problem \eqref{eq:AdjointProblemMatrix} reads
\begin{align}\label{eq:DualProblemMatrixFull}
	\bB_\delta^\top \bz_\delta + \bD_\delta\by_\delta = \bg_\delta.
\end{align}
}

\subsection{Petrov--Galerkin discretization of the gradient equation}
In order to obtain a discrete version of the gradient equation we test \eqref{eq:KKT2:c} with the basis functions of $\cU_\delta$, namely $\lambda \left( u_\delta, \delta u_\delta \right)_{\cH}- \langle R \delta u_\delta, z_\delta\rangle_{\cV'\times\cV}= 0$ for all $\delta u_\delta\in\cU_\delta$. Recalling the discretizations \eqref{eq:udelta} and \eqref{eq:zdelta} of $u_\delta$ and $z_\delta$, respectively, we obtain for $\ell=0,...,K-1$ and $j=1,...,{n}_h$
\allowdisplaybreaks
\begin{align*}
	0
	&= \lambda \left( u_\delta, \xi^\ell\otimes\phi_j \right)_{\cH}
				- \langle z_\delta , \xi^\ell\otimes R\phi_j\rangle_{\cV\times\cV'} \\
	&=\lambda  \sum_{k=0}^{K-1} \sum_{i=1}^{{n}_h} 
		u_i^k\, (\xi^k\otimes\phi_i, \xi^\ell\otimes\phi_j )_{\cH}
		- \sum_{k=0}^{K-1}\sum_{i=1}^{n_h} z_i^k\, 
			\langle \xi^k\otimes\phi_i, \xi^\ell\otimes R\phi_j \rangle_{\cV\times\cV'} \\
	&= \lambda  \sum_{k=0}^{K-1} \sum_{i=1}^{{n}_h}  u_i^k \,
			(\xi^k,\xi^\ell)_{L_2(I)}\, (\phi_i, \phi_j)_H
		- \sum_{k=0}^{K-1}\sum_{i=1}^{n_h} z_i^k\,(\xi^k,\xi^\ell)_{L_2(I)}\, 
			\langle R\phi_j, \phi_i\rangle_{V'\times V} \\
	&= \lambda [(\underline{M}^{\mathrm{time}}_\Dt\otimes \underline{{M}}^{\mathrm{space}}_h) \bu_\delta]_{\ell, j} - [(\underline{M}^{\mathrm{time}}_\Dt\otimes(\underline{N}^{\mathrm{space}}_h)^\top) \bz_\delta]_{\ell, j} \\
	&=\lambda [{\bM}_\delta \bu_\delta]_{\ell, j} - [ \tilde\bM_\delta^\top \bz_\delta]_{\ell, j}	.	
\end{align*}
Then, the discrete version of the gradient equation \eqref{eq:KKT2:c} reads
 \begin{align}\label{eq:OptimalityMatrixFull}
	\lambda{\bM}_{\delta} \bu_\delta - \tilde\bM_\delta^\top\bz_\delta = \boldsymbol{0}.
\end{align}
We note, that ${\bM}_{\delta}$ is a square mass matrix, i.e., invertible. For $R=I$, we have $\tilde\bM_\delta^\top={\bM}_{\delta}$, so that $\bu_\delta=\lambda^{-1} \bz_\delta$.

\subsection{The discrete optimality system}\label{Sec:discoptsys}
We can now put all pieces together and detail the discrete version of the first order optimality system \eqref{eq:KKT2}, namely
\begin{subequations}\label{eq:discKKT}
\allowdisplaybreaks
		\begin{align}
\label{eq:discKKT:a}
			b(\overline{y}_\delta,\delta z_\delta) 
				- \langle R\overline{u}_\delta,\delta z_\delta\rangle_{\cV'\times\cV}	&= h(\delta z_\delta) 
					&& \forall \delta z_\delta\in\cZ_\delta, \\\label{eq:discKKT:b}
			d( \overline{y}_\delta, \delta y_\delta)
					+b(\delta y_\delta,\overline{z}_\delta) 
					&= g(\delta y_\delta) 
					&& \forall \delta y_\delta\in\cY_\delta, \\\label{eq:discKKT:c}
			\lambda ( \overline{u}_\delta, \delta u_\delta)_\cH 
					- \langle R\delta u_\delta, \overline{z}_\delta\rangle_{\cV'\times\cV}
					&= 0
					&& \forall \delta u_\delta \in \cU_\delta.
		\end{align}
\end{subequations}

Recalling \eqref{eq:PrimalProblemMatrixFull}, \eqref{eq:DualProblemMatrixFull} and \eqref{eq:OptimalityMatrixFull}, the discrete first order optimality system \eqref{eq:discKKT} can be written in matrix form as
	\begin{align*}
		\begin{pmatrix} 
			\bD_\delta & \bB_\delta^T & \boldsymbol{0}  \\ 
			\bB_\delta & \boldsymbol{0} & -\tilde\bM_\delta  \\
			\boldsymbol{0} & -\tilde\bM_{\delta}^\top & \lambda\,{\bM}_{\delta}   
			\end{pmatrix}
		\begin{pmatrix} \by_\delta \\ \bz_\delta \\ \bu_\delta \end{pmatrix}
		= \begin{pmatrix} \bg_\delta \\ \bbf_\delta \\ \boldsymbol{0}  \end{pmatrix},
	\end{align*}
where all involved matrices have tensorproduct structure. In view of \eqref{eq:OptimalityMatrixFull}, i.e, $\lambda{\bM}_{\delta} \bu_\delta = \tilde\bM_{\delta}^\top \bz_\delta$, we can easily eliminate the variable $\bu_\delta$ and obtain the reduced system
	\begin{align}\label{eq:reddisoptsys}
		\bL_\delta
		\begin{pmatrix} 
			\by_\delta \\
			\bz_\delta   
		\end{pmatrix}
		= \begin{pmatrix} 
		\bg_\delta \\
		\bbf_\delta  
		\end{pmatrix},
		\qquad
		\bL_\delta :=
		\begin{pmatrix} 
			 \bD_\delta & \bB_\delta^T  \\
			 \bB_\delta 
			 & -\textstyle{\frac1\lambda} \tilde\bM_\delta \bM_{\delta}^{-1} \tilde\bM_\delta 
		\end{pmatrix}
	\end{align}
which is a discretized version of \eqref{eq:reducedoptimal}. All involved matrices are tensorproducts and for our choice, we have $\bM_{\delta}=\tilde\bM_\delta$. Set $\gamma:= \| \tilde\bM_\delta\bM_{\delta}^{-1} \tilde\bM_\delta\|= \|\bM_\delta\|$, which is bounded uniformly in $\delta\to 0$.

\begin{theorem}[Well-posedness of the discrete optimality system]\label{Thm:DiscreteOptSystem}
	Assume that the discrete inf-sup condition \eqref{eq:LBB} holds. Then, the discrete first order optimality system \eqref{eq:discKKT} is well-posed for all $\lambda>0$ and
	\begin{align*}
		\|\by_\delta\| + \| \bz_\delta\| &\le 
			\ts{\frac{\gamma+\lambda\beta + \lambda\beta^2}{\lambda\beta^3}}
			(\| \bh_\delta\| + \| \bg_\delta\|), 
			&
		\| \bu_\delta\| \le 
			\ts{\frac{\gamma+\lambda\beta + \lambda\beta^2}{\lambda^2\beta^3}}
			(\| \bh_\delta\| + \| \bg_\delta\|). 
	\end{align*}
\end{theorem}
\begin{proof}
	We can apply Theorem \ref{Thm:OptSystem} for the reduced discrete optimality system \eqref{eq:reddisoptsys}. Following the lines of its proof, \eqref{eq:estLinverse} ensures that 
		\begin{align}\label{eq:estLdeltainverse}
			\| \bL_\delta^{-1} \| 
			\le
			\ts{\frac{\gamma+\lambda\beta + \lambda\beta^2}{\lambda\beta^3}}, 
	\end{align}
	so that the reduced system is uniformly invertible. Adapting Remark \ref{Rem:optu} for the discrete case yields $\|\bu_\delta\| \le \textstyle{\frac1\lambda} \| \bL_\delta^{-1} \| (\| \bh_\delta\| + \| \bg_\delta\|)$.
\end{proof}

\begin{remark}[Differences to existing space-time methods, continued]\label{Rem:Diff3}
We continue Remarks \ref{Rem:Diff1} and \ref{Rem:Diff2} with highlighting the differences to previous publications, now concerning the discretization.\\
To summarize our approach, we start by an optimally stable Petrov--Galerkin discretization of the state equation based upon tensorproducts, which can be chosen to be equivalent to a Crank--Nicolson time stepping method. In a second step, we chose an appropriate discretization of the control. This automatically yields a Petrov--Galerkin discretization  of the adjoint equation and the gradient equation. Putting everything together results in a stable discretization of the optimality system along with a priori and a posteriori error estimates.
\begin{compactenum}[(a)]
\item \cite{MR2407012,MR2874969,MR2861431} suggest semi-discretizations for the state by discontinuous Galerkin methods. Since there $\cZ=\cY$, this can also be used for the adjoint state. Stability and approximation results are then given.
\item \cite{MR3343358,MR4076464} uses a Petrov--Galerkin method with temporal discontinuous trial functions and continuous test functions for primal and adjoint problems. The control is not discretized, but treated in a variational manner.
\item In \cite{MR4223221}, the derivation yields a $2\times 2$ saddle point problem for primal and dual state similar to \eqref{eq:KKT3}, which is discretized in a similar fashion as in our approach for the primal state equation. This means that also here primal and dual states use discretizations of the same order, which is different from our approach.\\
Moreover, \cite{MR4223221} uses an unstructured space-time discretization, whereas we suggest a tensorproduct approach. However, the tensorproduct discretization was here mainly chosen to allow the use of efficient solvers and can easily be replaced by other discretizations as well.
\end{compactenum}
\end{remark}

\subsection{Error analysis}%

\begin{corollary}[A priori estimate]\label{Cor:Apriori}
	Applying the Xu--Zikatanov Lemma \cite{MR1971217} yields a quasi-best approximation statement, i.e., 
		\begin{align}\label{eq:errorbounds}
		&\| y-y_\delta\|_\cY + 
		\|z-z_\delta\|_\cZ +
		\|u-u_\delta\|_\cU
			\le \nonumber\\
		&\le\max\{1,{\textstyle{\frac1\lambda}}\} 
				\ts{\frac{1+\lambda\beta + \lambda\beta^2}{\lambda\beta^3}}
			\left(
			\inf_{\tilde y_\delta\in\cY_\delta} \| y-\tilde y_\delta\|_\cY
			+ \inf_{\tilde z_\delta\in\cZ_\delta} \| z-\tilde z_\delta\|_\cZ
			+ \inf_{\tilde u_\delta\in\cU_\delta} \| u-\tilde u_\delta\|_\cU
			\right).
			\nonumber
		\end{align}
\end{corollary}

Using the above described discretization for $\cY_\delta$, $\cZ_\delta$ and $\cU_\delta$, we get an error of order $\cO(\max\{ h,\Delta t\})$ in the prescribed norms, which can easily be improved by using higher order discretizations (if the solution is sufficiently regular).

\begin{corollary}[A posteriori estimate]
	It holds that
		\begin{align*}
		\| y-y_\delta\|_\cY + 
		\|z-z_\delta\|_\cZ +
		\|u-u_\delta\|_\cU
		\le 
		\left( 2+{\textstyle{\frac1\lambda}}\right) \| r_\delta\|_{\cY'\times\cZ'\times\cU'},
		\end{align*}
		where $r_\delta := (g - D y_\delta, h - B y_\delta + R u_\delta, \lambda u_\delta - R^* z_\delta)^\top$ is the \emph{residual} of the optimality system.
\end{corollary}

The latter estimate allows us to use residual-based error estimates, which are e.g.\ particularly relevant for the reduced basis method in the case of parameter-dependent problems, see e.g.\ \cite{HeRoSt16}.

\subsection{Discretization of the cost function}%
Finally, we detail the space-time discretization of the cost function, i.e., 
\begin{align*}
	\hatbJ_\delta(\bu_\delta) 
	&
	\coloneqq  
	\ts{\frac{1}{2}}
		\|  y_\delta(T) - y_{d,h} \|_H^2   + 
		\ts{\frac{\lambda}{2}}  \|  u_\delta \|_\cH^2 \\
	&\kern-0pt
	= \ts{\frac12} (\theta^K(T)\, \by_{\delta}^K  - \by_{d,h})^\top \underline{M}^{\mathrm{space}}_h (\theta^K(T)\, \by_{\delta}^K  - \by_{d,h})
		+ \ts{\frac\lambda2} \bu_\delta^\top\,{\bM}_\delta \, \bu_\delta,
\end{align*}
where $y_{d,h} = \sum_{i=1}^{n_h} y_{d,i}\, \phi_i$ with the coefficient vector $\by_{d,h} \coloneqq  (y_{d,i})_{i=1,...,n_h}\in\R^{n_h}$ is a discretization of $y_d$.\footnote{{Doing so, we have that $y_{d,h} \in V_h$ (i.e., a piecewise linear approximation), which is useful for our experiments. We could of course have also used a piecewise constant approximation.}} 

We solve the optimal control problem by numerically solving the reduced $2 \times 2$ block linear system arising from the optimality system \eqref{eq:reddisoptsys}.

\section{Numerical Results}\label{Sec:5}
In this section, we present some results of our numerical experiments. {We follow two main goals: (1) We make quantitative comparisons concerning the inf-sup-stability of the optimality system and (2) we} 
compare the above presented space-time variational approach with the standard semi-discretization (see also \cite{s.glasa.mayerhoferk.urban2017} for such comparisons for parabolic problems).  We do not compare with other state-of-the-art methods as we are mainly interested in investigating the effect of simultaneous space-time discretization.  
In order to make the comparison fair, we chose an all-at-once method for the semi-discrete framework so that the reduced discrete optimality system is built in a similar manner in both approaches. Moreover, we used the Crank--Nicolson scheme for the semi-discrete problem since our choice for trial and test spaces for the primal problem is equivalent to this time-stepping scheme, \cite{k.urbana.t.patera2012,k.urbana.t.patera2014}. Thus, in the semi-discrete setting, primal and dual problems amount for a comparable number of operations, with a stability issue for the dual problem, of course. 
{Note that, in the semi-discrete case, the Crank--Nicolson scheme for the adjoint problem (i.e., $\partial_t z + \Laplace z = 0;\ z(T) = y(T)- y_d$) is backward in time.}

All results were obtained with \textsc{Matlab} R2020b on a machine with a quad core with $2.7$ GHz and $16$ GB of RAM.

{
\subsection{Discrete inf-sup constant}
We start by computing the discrete inf-sup constant of the optimality system and compare that with the bound \eqref{eq:estinfsup} in Corollary \ref{cor:inf_sup_os}. We report the data for a 1d example on $I \times \Omega =(0,1) \times (-1,1)$ with $\mu(x) = x^2 + 0.1$ for $n_h = 40$, $K= 80$, but stress that the results are representative also for other examples.

First, we investigate the dependence of the inf-sup constant w.r.t.\ the regularization parameter $\lambda$. In Figure \ref{plot_inf_sup_lambda}, we show the computed discrete inf-sup constant in comparison with the lower bound $(2+{\textstyle{\frac1\lambda}})^{-1}$. We observe the same quantitative behaviors of both curves and see that our bound seems to be almost sharp for increasing values of $\lambda$. In particular, we see the optimality (inf-sup is unity) already for $\lambda=10^{-2}$ and larger. For small values of $\lambda$, the bound is too pessimistic by almost two orders of magnitude.
\begin{figure}[ht]
\begin{center}
\begin{tikzpicture}

    \begin{loglogaxis}
    	[
	name = plot1,
	width=0.6\textwidth,
	height=0.4\textwidth,
	axis lines=left,
	xlabel={$\lambda$},
	every axis x label/.style={
    		at={(ticklabel* cs:1.05)},
    		anchor=west,
		},
	legend pos = south east,
	ymin= 8e-5, ymax=2,
	]
    	\addplot[style=solid, color=blue, line width=1pt] 
    		table [x=lambda,y=beta_OS]{./inf_sup_os_lambda.dat}; 
    	\addlegendentry{discrete inf-sup}
    	\addplot[style=densely dashdotdotted, color=black, line width=1pt]table [x=lambda,y=lower_bound]{./inf_sup_os_lambda.dat}; 
    	\addlegendentry{lower bound} 
    \end{loglogaxis}		    
	    	    						
\end{tikzpicture}
\end{center}
	\caption{Discrete inf-sup constant of the optimality system for different values of $\lambda$.} \label{plot_inf_sup_lambda}
\end{figure}

Next, we fix $\lambda=10^{-2}$ and investigate the dependence of the discretization. The results are presented in Figure \ref{plot_inf_up_n_K}. On the left, we fix $K=60$ and vary $n_h$, whereas on the right, we choose $n_h=60$ and modify $K$. We see that the lower bound is in fact pessimistic, stability improves as $K$ increases and worsens for $n_h$ -- as to be expected. However, we can confirm uniform stability (i.e., for all choices of $n_h$ and $K$) in all cases.
\begin{figure}[ht]
\begin{minipage}{0.49\textwidth}
\begin{tikzpicture}
    \begin{semilogyaxis}
    	[
	width=\textwidth,
	height=0.8\textwidth,
	axis lines=left,
	xlabel={$n_h$},
	legend pos = north east,
	ymin= .5e-2,
	ymax=6e0,
	title ={$K = 60$}, 
	]
    	\addplot[style=solid, color=blue, line width=1pt] 
    		table [x=n_h,y=beta_OS]{./inf_sup_os_n.dat}; 
    	\addlegendentry{discrete inf-sup}
    	\addplot[style=densely dashdotdotted, color=black, line width=1pt]table [x=n_h,y=lower_bound]{./inf_sup_os_n.dat}; 
    	\addlegendentry{lower bound} 
    \end{semilogyaxis}					
\end{tikzpicture}
\end{minipage}
\begin{minipage}{0.49\textwidth}
\begin{tikzpicture}
    \begin{semilogyaxis}
    	[
		title ={$n_h = 60$},	
	width=\textwidth,
	height=0.8\textwidth,
	axis lines=left,
	xlabel={$K$},
	legend pos = north west,
	ymin= .5e-2,
	ymax=6e0,
	]
    	\addplot[style=solid, color=red, line width=1pt] 
    		table [x=K,y=beta_OS]{./inf_sup_os_K.dat}; 
    	\addlegendentry{discrete inf-sup}
    	\addplot[style=densely dashdotdotted, color=black, line width=1pt]table [x=K,y=lower_bound]{./inf_sup_os_K.dat}; 
    	\addlegendentry{lower bound} 
    \end{semilogyaxis}	
    				
\end{tikzpicture}
\end{minipage}
	\caption{Discrete inf-sup constant of the optimality system for $\lambda=10^{-2}$ and different values of $n_h$ and $K$.} 	\label{plot_inf_up_n_K}
\end{figure}

\subsection{Comparison of space-time and semi-discrete methods}
Our next aim is to compare our space-time method  with the classical time-stepping. As already pointed out earlier, we choose the data in such a way that the results are in fact comparable.

\subsubsection{One-dimensional example}
We start by Problem \ref{prob:classicalFormulationOptContr} on $I \times \Omega =(0,1) \times (-1,1)$ for $\mu(x) \coloneqq x + 1.3$, boundary data $\eta(t,x) \coloneqq  \frac{\mathrm{sign}(x) 50 t^2}{\cosh(50x)} + (1.3 + x) \tanh(50 x) t^2$ and desired state $y_d(x) = \tanh(50 x)$. Again, we note that we got comparable results also for other data. 
We compare the value of the objective function that we reach by solving the optimality system with the two approaches. The results are shown in Figure \ref{plot_1D_obj_func} for two values of the regularization parameter $\lambda$. We show the value for increasing number $K$ of time steps and two different spatial discretizations, namely $n_h=101$ and $n_h=1001$.  First, we observe that the overall performance is independent of the choice of $\lambda$. Next, we see that both methods converge to the same value of the objective function as $K$ increases. However, the huge benefit of the space-time setting shows off, namely that we reach an almost optimal value also for very coarse temporal discretizations, which offers significant computational savings. 
\begin{figure}[ht]


\begin{minipage}{0.49\textwidth}
\renewcommand{\fnameone}{./1D_tanh_n_101_lam_1em1.dat}
\renewcommand{\fnametwo}{./1D_tanh_n_1001_lam_1em1.dat}

\begin{tikzpicture}    
    \begin{semilogxaxis}
    [
      width =\textwidth, 
      height =\textwidth,
      xlabel ={$K$},
      ylabel ={Objective function $J(y,u)$},
      xmin=0,xmax=1010,
      title = {$\lambda = 10^{-1}$},
      ]
    	\addplot[style=densely dashdotdotted,
    	         color=blue,
    	         mark=diamond,
    	         mark options={solid},
    	         line width=1pt] table [x=K,y=J_SD]{\fnameone};  
	\addlegendentry{sd, $n_h=101$}
    	\addplot[style=densely dashdotdotted,
    	         color=red,
    	         mark=o,
    	         mark options={solid},
    	         line width=1pt] table [x=K,y=J_ST]{\fnameone};  	
	\addlegendentry{st, $n_h=101$}

    	\addplot[style=solid,
    	         color=blue,
    	         mark=diamond,
    	         mark options={solid},
    	         line width=1pt] table [x=K,y=J_SD]{\fnametwo};  	
	\addlegendentry{sd, $n_h=1001$}
    	\addplot[style=solid,
    	         color=red,
    	         mark=o,
    	         mark options={solid},
    	         line width=1pt] table [x=K,y=J_ST]{\fnametwo};  	
 	\addlegendentry{st, $n_h=1001$}
  \end{semilogxaxis}
  
\end{tikzpicture}
\end{minipage}
\begin{minipage}{0.49\textwidth}

\renewcommand{\fnameone}{./1D_tanh_n_101_lam_1em3.dat}
\renewcommand{\fnametwo}{./1D_tanh_n_1001_lam_1em3.dat}

\begin{tikzpicture}
    
    \begin{semilogxaxis}
    [
      width =\textwidth, 
      height =\textwidth,
      xlabel ={$K$},
      ylabel ={Objective function $J(y,u)$},
      xmin=0,xmax=1010, 
      title = {$\lambda = 10^{-3}$},
      ]
    	\addplot[style=densely dashdotdotted,
    	         color=blue,
    	         mark=diamond,
    	         mark options={solid},
    	         line width=1pt] table [x=K,y=J_SD]{\fnameone};  
	\addlegendentry{sd, $n_h=101$}
    	\addplot[style=densely dashdotdotted,
    	         color=red,
    	         mark=o,
    	         mark options={solid},
    	         line width=1pt] table [x=K,y=J_ST]{\fnameone};  	
	\addlegendentry{st, $n_h=101$}

    	\addplot[style=solid,
    	         color=blue,
    	         mark=diamond,
    	         mark options={solid},
    	         line width=1pt] table [x=K,y=J_SD]{\fnametwo};  	
	\addlegendentry{sd, $n_h=1001$}
    	\addplot[style=solid,
    	         color=red,
    	         mark=o,
    	         mark options={solid},
    	         line width=1pt] table [x=K,y=J_ST]{\fnametwo};  	
 	\addlegendentry{st, $n_h=1001$}
  \end{semilogxaxis}
  
\end{tikzpicture}
\end{minipage}
	\caption{1d example, values of the objective function for different discretizations (left: $\lambda= 10^{-1}$, right: $\lambda=10^{-3}$, abbreviations: semi-discrete (sd), space-time (st)).}
	\label{plot_1D_obj_func}
\end{figure}
It is not surprising that this effect is due to the improved stability of the space-time method as we can also see in Figure \ref{fig_stability_st_sd}, where we depict the control for different values of $K$ for $\lambda=10^{-3}$ and $n_h=101$. We can clearly observe the stability issues for the semi-discrete approach in the left column, which do not appear in the space-time context.  
\begin{figure}[ht]
\begin{minipage}{0.49\textwidth}
\includegraphics[width=\textwidth]{./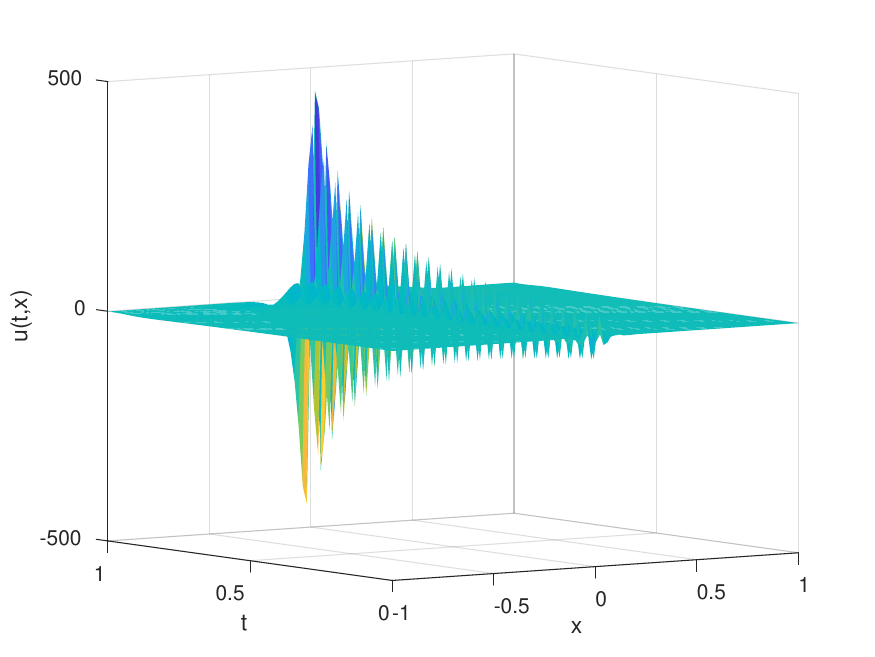}
\end{minipage}
\begin{minipage}{0.49\textwidth}
\includegraphics[width=\textwidth]{./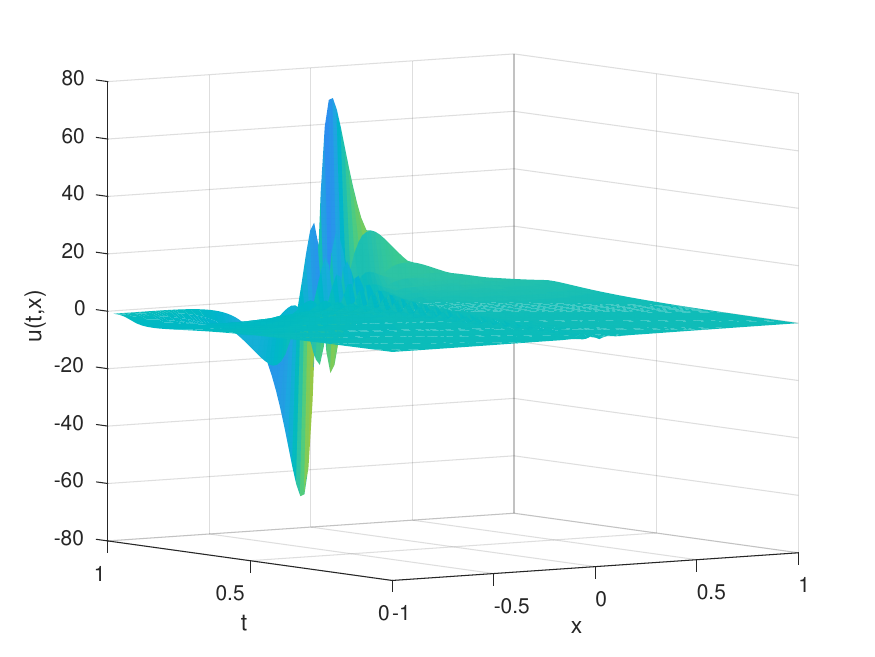}
\end{minipage}

\begin{minipage}{0.49\textwidth}
\includegraphics[width=\textwidth]{./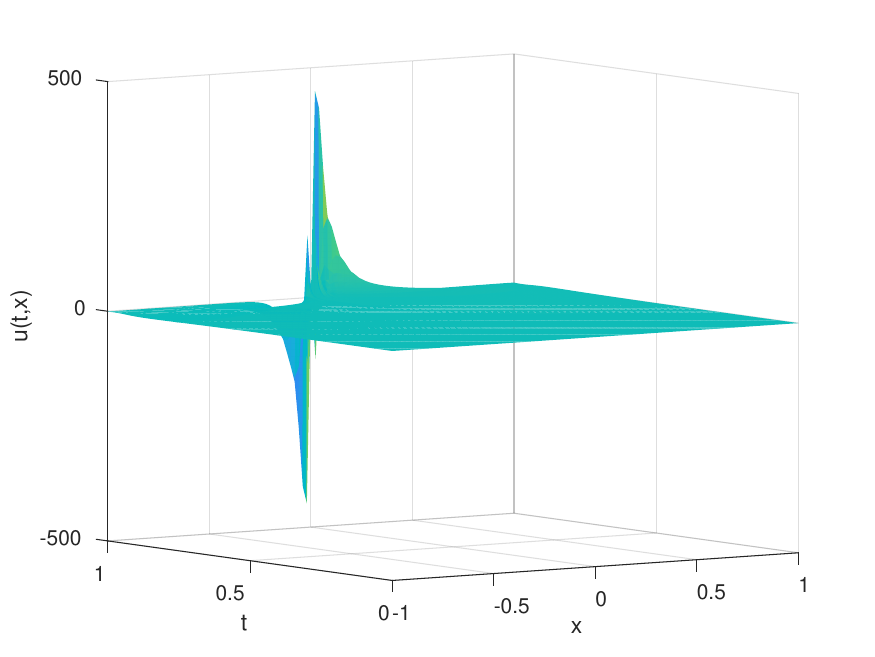}
\end{minipage}
\begin{minipage}{0.49\textwidth}
\includegraphics[width=\textwidth]{./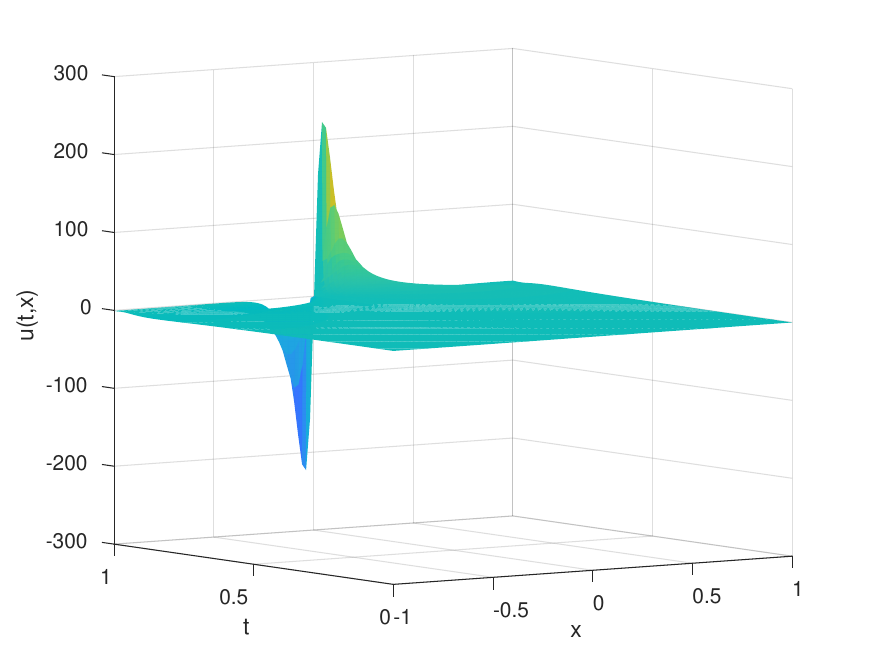}
\end{minipage}
\caption{Optimal control for $\lambda = 10^{-3}$ and $n_h=101$ ({left column:} semi-discrete, {right column:} space-time, {top row:} $K = 50$, {bottom row:} $K=500$).}
\label{fig_stability_st_sd}
\end{figure}

\subsubsection{Higher dimensional examples}
A possible criticism of the space-approach is the fact that the size of the optimality system might significantly grow with increasing space dimension. Hence, we realized both approaches also in 2d and 3d and report the results in the 2d case here.  We do not monitor CPU-time comparisons, but refer e.g.\ to \cite{j.henning.etal2019,henning2021weak}, where such comparisons have been done for space-time variational formulations of the heat and wave equation, respectively. It was shown there, that appropriate tensorproduct solvers in fact yield competitive CPU times for the arising space-time systems. The adaptation of those approaches to the optimality system \eqref{eq:reddisoptsys} is subject to ongoing work, see also Remark \ref{Rem:CPU} below.

Hence, we are going to report results for $I \times \Omega \coloneqq (0,1) \times (0,1)^2$ and boundary data $\mu(x) \coloneqq 0.25 \cosh(x y) +0.25$ along with a compatible function $\eta$. As desired state, we choose $y_d(x) = \tanh(10 (x-0.5)(y-0.5))$. As in the 1d case, we compare the values of the objective function, see Figure \ref{plot_2D_obj_func}. The overall behavior is very similar to the 1d case, namely we get a significant improvement of the space-approach over the semi-discrete one for small number of time steps $K$. Note, that here we use a linear scale for the horizontal axis as opposed to Figure \ref{plot_1D_obj_func}, where the results are shown in logarithmic scale. Moreover, for large values of $\lambda$, we observe the necessity of a sufficiently fine spatial discretization for both methods.
\begin{figure}[ht]


\begin{minipage}{0.49\textwidth}
\renewcommand{\fnameone}{./2D_tanh_alpha_10_n_54_lam_1em1.dat}
\renewcommand{\fnametwo}{./2D_tanh_alpha_10_n_213_lam_1em1.dat}

\begin{tikzpicture}    
    \begin{axis}
    [
      width =\textwidth, 
      height =\textwidth,
      xlabel ={$K$},
      ylabel ={Objective function $J(y,u)$},
      xmin=0,xmax=405, 
      title = {$\lambda = 10^{-1}$},
      ]
    	\addplot[style=densely dashdotdotted,
    	         color=blue,
    	         mark=diamond,
    	         mark options={solid},
    	         line width=1pt] table [x=K,y=J_SD]{\fnameone};  
	\addlegendentry{sd, $n_h=54$}
    	\addplot[style=densely dashdotdotted,
    	         color=red,
    	         mark=o,
    	         mark options={solid},
    	         line width=1pt] table [x=K,y=J_ST]{\fnameone};  	
	\addlegendentry{st, $n_h=54$}

    	\addplot[style=solid,
    	         color=blue,
    	         mark=diamond,
    	         mark options={solid},
    	         line width=1pt] table [x=K,y=J_SD]{\fnametwo};  	
	\addlegendentry{sd, $n_h=213$}
    	\addplot[style=solid,
    	         color=red,
    	         mark=o,
    	         mark options={solid},
    	         line width=1pt] table [x=K,y=J_ST]{\fnametwo};  	
 	\addlegendentry{st, $n_h=213$}
  \end{axis}
  
\end{tikzpicture}
\end{minipage}
\begin{minipage}{0.49\textwidth}

\renewcommand{\fnameone}{./2D_tanh_alpha_10_n_54_lam_1em3.dat}
\renewcommand{\fnametwo}{./2D_tanh_alpha_10_n_213_lam_1em3.dat}

\begin{tikzpicture}
    
    \begin{axis}
    [
      width =\textwidth, 
      height =\textwidth,
      xlabel ={$K$},
      ylabel ={Objective function $J(y,u)$},
      xmin=0,xmax=405, 
      title = {$\lambda = 10^{-3}$},
      ]
    	\addplot[style=densely dashdotdotted,
    	         color=blue,
    	         mark=diamond,
    	         mark options={solid},
    	         line width=1pt] table [x=K,y=J_SD]{\fnameone};  
	\addlegendentry{sd, $n_h=54$}
    	\addplot[style=densely dashdotdotted,
    	         color=red,
    	         mark=o,
    	         mark options={solid},
    	         line width=1pt] table [x=K,y=J_ST]{\fnameone};  	
	\addlegendentry{st, $n_h=54$}

    	\addplot[style=solid,
    	         color=blue,
    	         mark=diamond,
    	         mark options={solid},
    	         line width=1pt] table [x=K,y=J_SD]{\fnametwo};  	
	\addlegendentry{sd, $n_h=213$}
    	\addplot[style=solid,
    	         color=red,
    	         mark=o,
    	         mark options={solid},
    	         line width=1pt] table [x=K,y=J_ST]{\fnametwo};  	
 	\addlegendentry{st, $n_h=213$}
  \end{axis}
  
\end{tikzpicture}
\end{minipage}
	\caption{2d example, values of the objective function for different discretizations (left: $\lambda= 10^{-1}$, right: $\lambda=10^{-3}$, abbreviations: semi-discrete (sd), space-time (st)).}
	\label{plot_2D_obj_func}
\end{figure}

\begin{remark}\label{Rem:CPU}
	With the chosen all-at-once approach, we get very similar CPU times for both methods. As already pointed out earlier, a runtime comparison of best possible schemes is not the aim of this paper. Not using efficient tensorproduct solvers yields that the limiting factor is the memory -- in both cases. 
\end{remark}

}
\section{Summary, conclusions and outlook}\label{Sec:6}
We have considered a space-time variational formulation for a PDE-cons\-trai\-ned optimal control problem. Our first-optimize-then-discretize approach follows the abstract functional analytic framework of such problems, which is then detailed for the space-time variational method. This can be summarized as follows:
\begin{compactitem}
	\item Well-posed space-time variational formulation of the state equation. This yields different trial and test spaces (Petrov--Galerkin style) of minimal regularity;
	\item Formulation of the optimal control problem in the arising spaces, definition of the Lagrange function and derivation of KKT conditions. This yields the adjoint and gradient equations in natural spaces with minimal regularity requirements;
	\item Derivation of (necessary and sufficient) optimality conditions and optimality system (still in the infinite-dimensional setting);
	\item LBB-stable discretization of the optimality system. In special cases, this can be chosen to be equivalent to a Crank--Nicolson semi-discrete discretization, which allows quantitative numerical comparisons.
\end{compactitem}
Moreover, we reported on numerical experiments showing that space-time methods yield the same value of the objective function for significantly smaller number of unknowns. Since the CPU-times for the same number of unknowns turned out to be similar, this offers potential for significant speedup.

Topics for future research include control and state constraints, other types of PDEs for the constraints, improved schemes for solving the optimality system, adaptive discretization of the control, etc. Also efficient solvers that explicitly exploit the Kronecker structures of arising operators should be investigated. Finally, the above setting seems to be a very good starting point for investigating model reduction, e.g. \cite{k.urbana.t.patera2014}.

\section*{Acknowledgements}
We are grateful for Stefan Hain (Ulm), Michael Hinze (Koblenz), Davide Palitta (Bologna) and Stefan Volkwein (Konstanz) for fruitful discussions and very helpful remarks.
\bibliography{BRU}


\begin{thebibliography}{35}
\ifx \bisbn   \undefined \def \bisbn  #1{ISBN #1}\fi
\ifx \binits  \undefined \def \binits#1{#1}\fi
\ifx \bauthor  \undefined \def \bauthor#1{#1}\fi
\ifx \batitle  \undefined \def \batitle#1{#1}\fi
\ifx \bjtitle  \undefined \def \bjtitle#1{#1}\fi
\ifx \bvolume  \undefined \def \bvolume#1{\textbf{#1}}\fi
\ifx \byear  \undefined \def \byear#1{#1}\fi
\ifx \bissue  \undefined \def \bissue#1{#1}\fi
\ifx \bfpage  \undefined \def \bfpage#1{#1}\fi
\ifx \blpage  \undefined \def \blpage #1{#1}\fi
\ifx \burl  \undefined \def \burl#1{\textsf{#1}}\fi
\ifx \doiurl  \undefined \def \doiurl#1{\url{https://doi.org/#1}}\fi
\ifx \betal  \undefined \def \betal{\textit{et al.}}\fi
\ifx \binstitute  \undefined \def \binstitute#1{#1}\fi
\ifx \binstitutionaled  \undefined \def \binstitutionaled#1{#1}\fi
\ifx \bctitle  \undefined \def \bctitle#1{#1}\fi
\ifx \beditor  \undefined \def \beditor#1{#1}\fi
\ifx \bpublisher  \undefined \def \bpublisher#1{#1}\fi
\ifx \bbtitle  \undefined \def \bbtitle#1{#1}\fi
\ifx \bedition  \undefined \def \bedition#1{#1}\fi
\ifx \bseriesno  \undefined \def \bseriesno#1{#1}\fi
\ifx \blocation  \undefined \def \blocation#1{#1}\fi
\ifx \bsertitle  \undefined \def \bsertitle#1{#1}\fi
\ifx \bsnm \undefined \def \bsnm#1{#1}\fi
\ifx \bsuffix \undefined \def \bsuffix#1{#1}\fi
\ifx \bparticle \undefined \def \bparticle#1{#1}\fi
\ifx \barticle \undefined \def \barticle#1{#1}\fi
\bibcommenthead
\ifx \bconfdate \undefined \def \bconfdate #1{#1}\fi
\ifx \botherref \undefined \def \botherref #1{#1}\fi
\ifx \url \undefined \def \url#1{\textsf{#1}}\fi
\ifx \bchapter \undefined \def \bchapter#1{#1}\fi
\ifx \bbook \undefined \def \bbook#1{#1}\fi
\ifx \bcomment \undefined \def \bcomment#1{#1}\fi
\ifx \oauthor \undefined \def \oauthor#1{#1}\fi
\ifx \citeauthoryear \undefined \def \citeauthoryear#1{#1}\fi
\ifx \endbibitem  \undefined \def \endbibitem {}\fi
\ifx \bconflocation  \undefined \def \bconflocation#1{#1}\fi
\ifx \arxivurl  \undefined \def \arxivurl#1{\textsf{#1}}\fi
\csname PreBibitemsHook\endcsname

\bibitem{f.troeltzsch2009}
\begin{bbook}
\bauthor{\bsnm{Tr{\"o}ltzsch}, \binits{F.}}:
\bbtitle{Optimal Control of Partial Differential Equations: Theory, Methods,
  and Applications}.
\bpublisher{American Math. Soc.},
\blocation{Providence}
(\byear{2010})
\end{bbook}
\endbibitem

\bibitem{MR2516528}
\begin{bbook}
\bauthor{\bsnm{Hinze}, \binits{M.}},
\bauthor{\bsnm{Pinnau}, \binits{R.}},
\bauthor{\bsnm{Ulbrich}, \binits{M.}},
\bauthor{\bsnm{Ulbrich}, \binits{S.}}:
\bbtitle{Optimization with {PDE} Constraints}.
\bsertitle{Mathematical Modelling: Theory and Applications},
vol. \bseriesno{23}.
\bpublisher{Springer},
\blocation{Heidelberg}
(\byear{2009})
\end{bbook}
\endbibitem

\bibitem{MR2407012}
\begin{barticle}
\bauthor{\bsnm{Meidner}, \binits{D.}},
\bauthor{\bsnm{Vexler}, \binits{B.}}:
\batitle{A priori error estimates for space-time finite element discretization
  of parabolic optimal control problems. {I}. {P}roblems without control
  constraints}.
\bjtitle{SIAM J. Control Optim.}
\bvolume{47}(\bissue{3}),
\bfpage{1150}--\blpage{1177}
(\byear{2008})
\end{barticle}
\endbibitem

\bibitem{MR2874969}
\begin{barticle}
\bauthor{\bsnm{Neitzel}, \binits{I.}},
\bauthor{\bsnm{Vexler}, \binits{B.}}:
\batitle{A priori error estimates for space-time finite element discretization
  of semilinear parabolic optimal control problems}.
\bjtitle{Numer. Math.}
\bvolume{120}(\bissue{2}),
\bfpage{345}--\blpage{386}
(\byear{2012})
\end{barticle}
\endbibitem

\bibitem{MR2861431}
\begin{barticle}
\bauthor{\bsnm{Meidner}, \binits{D.}},
\bauthor{\bsnm{Vexler}, \binits{B.}}:
\batitle{A priori error analysis of the {P}etrov-{G}alerkin {C}rank-{N}icolson
  scheme for parabolic optimal control problems}.
\bjtitle{SIAM J. Contr. Opt.}
\bvolume{49}(\bissue{5}),
\bfpage{2183}--\blpage{2211}
(\byear{2011})
\end{barticle}
\endbibitem

\bibitem{MR3343358}
\begin{barticle}
\bauthor{\bparticle{von} \bsnm{Daniels}, \binits{N.}},
\bauthor{\bsnm{Hinze}, \binits{M.}},
\bauthor{\bsnm{Vierling}, \binits{M.}}:
\batitle{Crank-{N}icolson time stepping and variational discretization of
  control-constrained parabolic optimal control problems}.
\bjtitle{SIAM J. Contr. Opt.}
\bvolume{53}(\bissue{3}),
\bfpage{1182}--\blpage{1198}
(\byear{2015})
\end{barticle}
\endbibitem

\bibitem{MR4076464}
\begin{barticle}
\bauthor{\bparticle{von} \bsnm{Daniels}, \binits{N.}},
\bauthor{\bsnm{Hinze}, \binits{M.}}:
\batitle{Variational discretization of a control-constrained parabolic
  bang-bang optimal control problem}.
\bjtitle{J. Comput. Math.}
\bvolume{38}(\bissue{1}),
\bfpage{14}--\blpage{40}
(\byear{2020})
\end{barticle}
\endbibitem

\bibitem{MR4223221}
\begin{barticle}
\bauthor{\bsnm{Langer}, \binits{U.}},
\bauthor{\bsnm{Steinbach}, \binits{O.}},
\bauthor{\bsnm{Tr\"{o}ltzsch}, \binits{F.}},
\bauthor{\bsnm{Yang}, \binits{H.}}:
\batitle{Unstructured space-time finite element methods for optimal control of
  parabolic equations}.
\bjtitle{SIAM J. Sci. Comput.}
\bvolume{43}(\bissue{2}),
\bfpage{744}--\blpage{771}
(\byear{2021})
\end{barticle}
\endbibitem

\bibitem{j.l.lions1971}
\begin{bbook}
\bauthor{\bsnm{Lions}, \binits{J.L.}}:
\bbtitle{Optimal Control of Systems Governed by Partial Differential
  Equations}.
\bpublisher{Springer},
\blocation{New York}
(\byear{1971})
\end{bbook}
\endbibitem

\bibitem{lions.magenes.2}
\begin{bbook}
\bauthor{\bsnm{Lions}, \binits{J.L.}},
\bauthor{\bsnm{Magenes}, \binits{E.}}:
\bbtitle{Non-Homogeneous Boundary Value Problems and Applications}
vol. \bseriesno{2}.
\bpublisher{Springer},
\blocation{New York}
(\byear{1972})
\end{bbook}
\endbibitem

\bibitem{r.dautrayj.lions1992}
\begin{bbook}
\bauthor{\bsnm{Dautray}, \binits{R.}},
\bauthor{\bsnm{Lions}, \binits{J.}}:
\bbtitle{Mathematical Analysis and Numerical Methods for Science and
  Technology}.
\bpublisher{Springer},
\blocation{New York}
(\byear{1992})
\end{bbook}
\endbibitem

\bibitem{c.schwabr.stevenson2009}
\begin{barticle}
\bauthor{\bsnm{Schwab}, \binits{C.}},
\bauthor{\bsnm{Stevenson}, \binits{R.}}:
\batitle{Space-time adaptive wavelet methods for parabolic evolution problems}.
\bjtitle{Math. Comp.}
\bvolume{78}(\bissue{267}),
\bfpage{1293}--\blpage{1318}
(\byear{2009})
\end{barticle}
\endbibitem

\bibitem{k.urbana.t.patera2012}
\begin{barticle}
\bauthor{\bsnm{Urban}, \binits{K.}},
\bauthor{\bsnm{Patera}, \binits{A.}}:
\batitle{A new error bound for reduced basis approximation of parabolic partial
  differential equations}.
\bjtitle{C.R. Math. Acad. Sci. Paris}
\bvolume{3-4}(\bissue{350}),
\bfpage{203}--\blpage{207}
(\byear{2012})
\end{barticle}
\endbibitem

\bibitem{MR1971217}
\begin{barticle}
\bauthor{\bsnm{Xu}, \binits{J.}},
\bauthor{\bsnm{Zikatanov}, \binits{L.}}:
\batitle{Some observations on {B}abu\v ska and {B}rezzi theories}.
\bjtitle{Numer. Math.}
\bvolume{94}(\bissue{1}),
\bfpage{195}--\blpage{202}
(\byear{2003})
\end{barticle}
\endbibitem

\bibitem{r.andreev2012}
\begin{botherref}
\oauthor{\bsnm{Andreev}, \binits{R.}}:
Stability of space-time petrov-galerkin discretizations for parabolic evolution
  equations.
PhD thesis,
ETH Z{\"u}rich, Nr. 20842
(2012)
\end{botherref}
\endbibitem

\bibitem{k.urbana.t.patera2014}
\begin{barticle}
\bauthor{\bsnm{Urban}, \binits{K.}},
\bauthor{\bsnm{Patera}, \binits{A.}}:
\batitle{An improved error bound for reduced basis approximation of linear
  parabolic problems}.
\bjtitle{Math. Comp.}
\bvolume{83}(\bissue{288}),
\bfpage{1599}--\blpage{1615}
(\byear{2014})
\end{barticle}
\endbibitem

\bibitem{GunzburgerKunoth2011}
\begin{barticle}
\bauthor{\bsnm{Gunzburger}, \binits{M.D.}},
\bauthor{\bsnm{Kunoth}, \binits{A.}}:
\batitle{Space-time adaptive wavelet methods for optimal control problems
  constrained by parabolic evolution equations}.
\bjtitle{SIAM J. Contr. Opt.}
\bvolume{49}(\bissue{3}),
\bfpage{1150}--\blpage{1170}
(\byear{2011})
\end{barticle}
\endbibitem

\bibitem{stevenson2021waveletintime}
\begin{barticle}
\bauthor{\bsnm{Stevenson}, \binits{R.}},
\bauthor{\bparticle{van} \bsnm{Veneti{\"e}}, \binits{R.}},
\bauthor{\bsnm{Westerdiep}, \binits{J.}}:
\batitle{A wavelet-in-time, finite element-in-space adaptive method for
  parabolic evolution equations}.
\bjtitle{Adv. Comp. Math.}
\bvolume{48}(\bissue{3}),
\bfpage{17}
(\byear{2022})
\end{barticle}
\endbibitem

\bibitem{o.steinbach2015}
\begin{barticle}
\bauthor{\bsnm{Steinbach}, \binits{O.}}:
\batitle{Space-time finite element methods for parabolic problems}.
\bjtitle{Comp. Meth. Appl. Math.}
\bvolume{15}(\bissue{4}),
\bfpage{551}--\blpage{566}
(\byear{2015})
\end{barticle}
\endbibitem

\bibitem{j.henning.etal2019}
\begin{bchapter}
\bauthor{\bsnm{Henning}, \binits{J.}},
\bauthor{\bsnm{Palitta}, \binits{D.}},
\bauthor{\bsnm{Simoncini}, \binits{V.}},
\bauthor{\bsnm{Urban}, \binits{K.}}:
\bctitle{Matrix oriented reduction of space-time {Petrov-Galerkin} variational
  problems}.
In: \beditor{\bsnm{Vermolen}, \binits{F.J.}},
\beditor{\bsnm{Vuik}, \binits{C.}} (eds.)
\bbtitle{Numerical Mathematics and Advanced Applications {ENUMATH} 2019},
pp. \bfpage{1049}--\blpage{1057}.
\bpublisher{Springer},
\blocation{Switzerland}
(\byear{2019})
\end{bchapter}
\endbibitem

\bibitem{palitta2019matrix}
\begin{barticle}
\bauthor{\bsnm{Palitta}, \binits{D.}}:
\batitle{Matrix equation techniques for certain evolutionary partial
  differential equations}.
\bjtitle{J. Sci. Comput.}
\bvolume{3},
\bfpage{87}--\blpage{99}
(\byear{2021})
\end{barticle}
\endbibitem

\bibitem{TDCM14}
\begin{botherref}
\oauthor{\bsnm{Ellis}, \binits{T.E.}},
\oauthor{\bsnm{Demkowicz}, \binits{L.}},
\oauthor{\bsnm{Chan}, \binits{J.L.}},
\oauthor{\bsnm{Moser}, \binits{R.D.}}:
{Space-Time DPG: Designing a Method for Massively Parallel CFD}.
{ICES Report 14-32, Univ. Texas at Austin}
(2014)
\end{botherref}
\endbibitem

\bibitem{DemGop11}
\begin{barticle}
\bauthor{\bsnm{Demkowicz}, \binits{L.}},
\bauthor{\bsnm{Gopalakrishnan}, \binits{J.}}:
\batitle{A class of discontinuous {P}etrov-{G}alerkin methods. {II}. {O}ptimal
  test functions}.
\bjtitle{Numer. Meth. PDEs}
\bvolume{27}(\bissue{1}),
\bfpage{70}--\blpage{105}
(\byear{2011})
\end{barticle}
\endbibitem

\bibitem{r.andreev2016A}
\begin{barticle}
\bauthor{\bsnm{Andreev}, \binits{R.}}:
\batitle{On long time integration of the heat equation}.
\bjtitle{Calcolo}
\bvolume{53}(\bissue{1}),
\bfpage{19}--\blpage{34}
(\byear{2016})
\end{barticle}
\endbibitem

\bibitem{Yano14}
\begin{barticle}
\bauthor{\bsnm{Yano}, \binits{M.}}:
\batitle{A space-time {P}etrov--{G}alerkin certified reduced basis method:
  Application to the {B}oussinesq equations}.
\bjtitle{SIAM J. Sci. Comput.}
\bvolume{36}(\bissue{1}),
\bfpage{232}--\blpage{266}
(\byear{2014})
\end{barticle}
\endbibitem

\bibitem{c.mollet2016}
\begin{botherref}
\oauthor{\bsnm{Mollet}, \binits{C.}}:
{Parabolic PDEs in Space-Time Formulations \textendash \ Stability for
  Petrov-Galerkin Discretizations with B-Splines and Existence of Moments for
  Problems with Random Coefficients}.
PhD thesis,
Univ. K{\"o}ln
(2016)
\end{botherref}
\endbibitem

\bibitem{MR3328986}
\begin{bbook}
\bauthor{\bsnm{Hinze}, \binits{M.}},
\bauthor{\bsnm{K\"{o}ster}, \binits{M.}},
\bauthor{\bsnm{Turek}, \binits{S.}}:
In: \beditor{\bsnm{Leugering}, \binits{G.}},
\beditor{\bsnm{Benner}, \binits{P.}},
\beditor{\bsnm{Engell}, \binits{S.}},
\beditor{\bsnm{Griewank}, \binits{A.}},
\beditor{\bsnm{Harbrecht}, \binits{H.}},
\beditor{\bsnm{Hinze}, \binits{M.}},
\beditor{\bsnm{Rannacher}, \binits{R.}},
\beditor{\bsnm{Ulbrich}, \binits{S.}} (eds.)
\bbtitle{Space-time {N}ewton-multigrid strategies for nonstationary distributed
  and boundary flow control problems},
pp. \bfpage{383}--\blpage{401}.
\bpublisher{Springer},
\blocation{Cham}
(\byear{2014})
\end{bbook}
\endbibitem

\bibitem{MR2872584}
\begin{barticle}
\bauthor{\bsnm{Borz\`\i}, \binits{A.}},
\bauthor{\bsnm{Gonz\'{a}lez~Andrade}, \binits{S.}}:
\batitle{Multigrid solution of a {L}avrentiev-regularized state-constrained
  parabolic control problem}.
\bjtitle{Numer. Math. Theory Methods Appl.}
\bvolume{5}(\bissue{1}),
\bfpage{1}--\blpage{18}
(\byear{2012})
\end{barticle}
\endbibitem

\bibitem{j.delosreyes2015}
\begin{bbook}
\bauthor{\bparticle{los} \bsnm{Reyes}, \binits{J.D.}}:
\bbtitle{Numerical PDE-Constrained Optimization}.
\bpublisher{Springer}, \blocation{???}
(\byear{2015})
\end{bbook}
\endbibitem

\bibitem{m.hinzer.pinnaum.ulbrichs.ulbrich2009}
\begin{bbook}
\bauthor{\bsnm{Hinze}, \binits{M.}},
\bauthor{\bsnm{Pinnau}, \binits{R.}},
\bauthor{\bsnm{Ulbrich}, \binits{M.}},
\bauthor{\bsnm{Ulbrich}, \binits{S.}}:
\bbtitle{Optimization with PDE Constraints}.
\bpublisher{Springer},
\blocation{Heidelberg}
(\byear{2009})
\end{bbook}
\endbibitem

\bibitem{r.andreev2013A}
\begin{barticle}
\bauthor{\bsnm{Andreev}, \binits{R.}}:
\batitle{Stability of sparse space-time finite element discretizations of
  linear parabolic evolution equations}.
\bjtitle{IMA J. Numer. Anal.}
\bvolume{33}(\bissue{1}),
\bfpage{242}--\blpage{260}
(\byear{2013})
\end{barticle}
\endbibitem

\bibitem{Hinze2005}
\begin{barticle}
\bauthor{\bsnm{Hinze}, \binits{M.}}:
\batitle{A variational discretization concept in control constrained
  optimization: The linear-quadratic case}.
\bjtitle{Comp. Opt. Appl.}
\bvolume{30}(\bissue{1}),
\bfpage{45}--\blpage{61}
(\byear{2005})
\end{barticle}
\endbibitem

\bibitem{HeRoSt16}
\begin{bbook}
\bauthor{\bsnm{Hesthaven}, \binits{J.S.}},
\bauthor{\bsnm{Rozza}, \binits{G.}},
\bauthor{\bsnm{Stamm}, \binits{B.}}:
\bbtitle{Certified Reduced Basis Methods for Parametrized Partial Differential
  Equations}.
\bpublisher{Springer},
\blocation{Cham}
(\byear{2016})
\end{bbook}
\endbibitem

\bibitem{s.glasa.mayerhoferk.urban2017}
\begin{bchapter}
\bauthor{\bsnm{Glas}, \binits{S.}},
\bauthor{\bsnm{Mayerhofer}, \binits{A.}},
\bauthor{\bsnm{Urban}, \binits{K.}}:
\bctitle{Two ways to treat time in reduced basis methods}.
In: \beditor{\bsnm{Benner}, \binits{P.}},
\beditor{\bsnm{Ohlberger}, \binits{M.}},
\beditor{\bsnm{Patera}, \binits{A.}},
\beditor{\bsnm{Rozza}, \binits{G.}},
\beditor{\bsnm{Urban}, \binits{K.}} (eds.)
\bbtitle{Model Reduction of Parametrized Systems},
pp. \bfpage{1}--\blpage{16}.
\bpublisher{Springer},
\blocation{Cham}
(\byear{2017})
\end{bchapter}
\endbibitem

\bibitem{henning2021weak}
\begin{barticle}
\bauthor{\bsnm{Henning}, \binits{J.}},
\bauthor{\bsnm{Palitta}, \binits{D.}},
\bauthor{\bsnm{Simoncini}, \binits{V.}},
\bauthor{\bsnm{Urban}, \binits{K.}}:
\batitle{An ultraweak space-time variational formulation for the wave equation:
  Analysis and efficient numerical solution}.
\bjtitle{ESAIM: M2AN}
\bvolume{56}(\bissue{4}),
\bfpage{1173}--\blpage{1198}
(\byear{2022})
\end{barticle}
\endbibitem

\end{thebibliography}

\end{document}